\newenvironment{proofof}[1]{\par\noindent{\bfseries\upshape Proof of #1\ }}{\qed}
\newcommand{\reals}{\mathbb{R}}
\newcommand{\posreals}{\reals_{> 0}}
\newcommand{\nnegreals}{\reals_{\geq 0}}
\newcommand{\nats}{\mathbb{N}}
\newcommand{\natz}{\mathbb{N}_{0}}
\newcommand{\indica}[1]{\mathbb{I}_{#1}}
\newcommand{\prev}{\mathrm{E}}
\newcommand{\upprev}{\overline{\mathrm{E}}}
\newcommand{\lowprev}{\underline{\mathrm{E}}}
\newcommand{\lupprev}[1]{\overline{\mathrm{Q}}_{#1}}
\newcommand{\settrans}{\mathscr{T}}
\newcommand{\statespace}{\mathscr{X}}
\newcommand{\samplespace}{\Omega}
\newcommand{\setofgambles}{\mathscr{L}}
\newcommand{\setoffingambles}{\mathscr{L}_{\mathrm{\/ fin}}}
\newcommand{\uptrans}{\overline{T}}
\newcommand{\lowtrans}{\underline{T}}
\newcommand{\probability}[2]{\mathrm{P}(#1\vert#2)}
\newcommand{\setofprocesses}[1]{\mathscr{P}_{#1}}
\begin{document}

\title{A Recursive Algorithm for Computing Inferences in Imprecise Markov Chains}
\titlerunning{Computing Inferences in Imprecise Markov Chains}

\author{\Large Natan T'Joens
\and Thomas Krak
\and Jasper De Bock
\and Gert de Cooman
}

\authorrunning{N. T'Joens et al.}

\institute{ELIS -- FLip, Ghent University, Belgium
\email{\{natan.tjoens,thomas.krak,jasper.debock,gert.decooman\}@ugent.be}}

\maketitle

\begin{abstract}
We present an algorithm that can efficiently compute a broad class of inferences for discrete-time imprecise Markov chains, a generalised type of Markov chains that allows one to take into account partially specified probabilities and other types of model uncertainty. The class of inferences that we consider contains, as special cases, tight lower and upper bounds on expected hitting times, on hitting probabilities and on expectations of functions that are a sum or product of simpler ones. Our algorithm exploits the specific structure that is inherent in all these inferences: they admit a general recursive decomposition.
This allows us to achieve a computational complexity that scales linearly in the number of time points on which the inference depends, instead of the exponential scaling that is typical for a naive approach.

\keywords{Imprecise Markov chains \and Upper and lower expectations \and Recursively decomposable inferences.} 
\end{abstract}

\section{Introduction}

 Markov chains are popular probabilistic models for describing the behaviour of dynamical systems under uncertainty. The crucial simplifying assumption in these models is that the probabilities describing the system's future behaviour are conditionally independent of its past behaviour, given that we know the current state of the system; this is the canonical \emph{Markov property}. 
 
 It is this Markov assumption that makes the parametrisation of a Markov chain relatively straightforward---indeed, as we will discuss in Section~\ref{sec3:Markov}, the uncertain dynamic behaviour is then completely characterised by a transition matrix $T$, whose elements $T(x_n,x_{n+1})= \probability{X_{n+1}=x_{n+1}}{X_n=x_n}$ describe the probabilities that the system will transition from any state $x_n$ at time $n$, to any state $x_{n+1}$ at time $n+1$. Note that $T$ itself is independent of the time $n$; this is the additional assumption of \emph{time homogeneity} that is often imposed implicitly in this context. An important advantage of these assumptions is that the resulting matrix $T$ can be used to solve various important inference problems, using one of the many available efficient algorithms.

In many cases however, the numerical value of the transition matrix $T$ may not be known exactly; that is, there may be additional (higher-order) uncertainty about the model itself. Moreover, it can be argued that simplifying assumptions like the Markov property and time homogeneity are often unrealistic in practice. It is of interest, then, to compute inferences in a manner that is \emph{robust}; both to violations of such simplifying assumptions, and to variations in the numerical values of the transition probabilities.

The theory of \emph{imprecise probabilities} allows us to describe such additional uncertainties by using, essentially, \emph{sets} of traditional (``precise'') models. In particular, such a set is comprised of all the models that we deem ``plausible''; for instance, we may include all Markov chains whose characterising transition matrix $T$ is included in some given \emph{set} $\settrans$ of transition matrices. In this way we can also include non-homogeneous Markov chains, by simply requiring that their (now time-dependent) transition matrices remain in $\settrans$. Moreover, we can even include non-Markovian models in such a set. 
This leads to the notion of an \emph{imprecise Markov chain}. 
The robust inferences that we are after, are then the tightest possible lower and upper bounds on the inferences computed for each of the included precise models.

In this work, we present an efficient algorithm for solving a large class of these inferences within imprecise Markov chains. 
Broadly speaking, this class consists of inferences that depend on the uncertain state of the system at a finite number of time instances, and which can be decomposed in a particular recursive form. 
As we will discuss, it contains as special cases the \emph{(joint) probabilities} of sequences of states; the \emph{hitting probabilities} and \emph{expected hitting times} of subsets of the possible states; and \emph{time averages} of functions of the state of the system.

Interestingly, existing algorithms for some of these inferences turn out to correspond to special cases of our algorithm, giving our algorithm a unifying character. Time averages, for example, were already considered in~\cite{8535240}, and some of the results in~\cite{Krak2019HittingTimesISIPTA}---a theoretical study of lower and upper expected hitting times and probabilities---can be interpreted as a special cases of the algorithm presented here. Readers that are familiar with recursive algorithms for credal networks under epistemic irrelevance~\cite{DeBock2017IJAR,debock2015credal,deCooman:2010gd} might also recognise some of what we do; in fact, many of the ideas behind our algorithm have previously been discussed in this more general context~\cite[Chapter 7]{debock2015credal}. 


In order to facilitate the reading, proofs and intermediate results are relegated to the appendix.

\section{Preliminaries}\label{sec3:Markov}

We denote the natural numbers, without $0$, by $\nats$, and let $\natz \coloneqq \nats \cup \{0\}$.  
The set of positive real numbers is denoted by $\posreals$ and the set of non-negative real numbers by $\nnegreals$. 
Throughout, we let $\indica{A}$ denote the indicator of any subset $A\subseteq \mathscr{Y}$ of a set $\mathscr{Y}$; so, for any $y\in\mathscr{Y}$, $\indica{A}(y)\coloneqq 1$ if $y\in A$ and $\indica{A}(y)\coloneqq 0$ otherwise.

Before we can introduce the notion of an imprecise Markov chain, we first need to discuss general (non-Markovian) stochastic processes. These are arguably most commonly formalised using a measure-theoretic approach; however, the majority of our results do not require this level of generality, and so we will keep the ensuing introduction largely intuitive and informal. 

Let us start by considering the \emph{realisations} of a stochastic process. 
At each point in time $n\in\nats$, such a process is in a certain \emph{state} $x_n$, which is an element of a \emph{finite} non-empty state space $\statespace$. 
A realisation of the process is called a \emph{path}, and is an infinite sequence $\omega = x_1 x_2 x_3 \cdots$ where, at each discrete time point $n\in\nats$, $\omega_n\coloneqq x_n\in\statespace$ is the state obtained by the process at time $n$, on the path $\omega$. 
So, we can interpret any path as a map $\omega:\nats\to\statespace$, allowing us to collect all paths in the set $\samplespace\coloneqq \statespace{}^{\nats}$. Moreover, for any $\omega\in\samplespace$ and any $m,n\in\nats$ with $m\leq n$, we use the notation $\omega_{m:n}$ to denote the finite sequence of states $\omega_m\cdots\omega_n\in\statespace^{n-m+1}$.

A stochastic process is now an infinite sequence $X_1X_2X_3\cdots$ of uncertain states where, for all $n\in\nats$, the uncertain state at time $n$ is a function of the form  $X_n:\samplespace\rightarrow\statespace:\omega\mapsto\omega_n$. 
Similarly, we can consider finite sequences of such states where, for all $m,n\in\nats$ with $m\leq n$, $X_{m:n}:\samplespace\rightarrow \statespace^{n-m+1}:\omega\mapsto \omega_{m:n}$.  
These states are uncertain in the sense that we do not know which realisation $\omega\in\Omega$ will obtain in reality; rather, we assume that we have assessments of the probabilities $\probability{X_{n+1} = x_{n+1}}{X_{1:n} = x_{1:n}}$, for any $n \in \nats$ and any $x_{1:n} \in \statespace^{n}$. 
Probabilities of this form tell us something about which state the process might be in at time $n+1$, given that we know that at time points $1$ through $n$, it followed the sequence $x_{1:n}$. 
Moreover, we can consider probabilities of the form $\mathrm{P}(X_1=x_1)$ for any $x_1\in\statespace$; this tells us something about the state that the process might start in. 
It is well known that, taken together, these probabilities suffice to construct a global probability model for the \emph{entire} process $X_1X_2X_3\cdots$, despite each assessment only being about a finite subsequence of the states; see e.g. the discussion surrounding~\cite[Theorem 5.16]{kallenberg2006foundations} for further details on formalising this in a proper measure-theoretic setting. We simply use $\mathrm{P}$ to denote this global model.

Once we have such a global model $\mathrm{P}$, we can talk about \emph{inferences} in which we are interested. In general, these are typically encoded by functions $f:\Omega\to\reals$ of the unknown realisation $\omega\in\Omega$, and we collect all functions of this form in the set $\setofgambles(\samplespace)$. 
To compute such an inference consists in evaluating the (conditional) expected value $\prev{}_\mathrm{P}(f \vert \,C)$ of $f$ with respect to the model $\mathrm{P}$, where $C\subseteq\Omega$ is an event of the form $X_{m:n} = x_{m:n}$ with $m,n \in \nats{}$ such that $m \leq n$. 
In particular, if $\mathrm{P}$ is a global model in the measure-theoretic sense, then under some regularity conditions like the measurability of $f$, we would be interested in computing the quantity $\prev{}_\mathrm{P}(f\vert\,C) \coloneqq \int_{\samplespace}f(\omega)\,\mathrm{d}\mathrm{P}(\omega\vert C)$. 
For notational convenience, we will also use $X_{1:0} \coloneqq \samplespace{}$ as a trivial conditioning event, allowing us to regard unconditional expectations as a special case of conditional ones. 

A special type of inferences that will play an important role in the remainder of this work are those for which the function $f$ only depends on a \emph{finite} subsequence of the path $\omega$, thereby vastly simplifying the definition of its expectation. 
In particular, if an inference only depends on the states at time points $m$ through $n$, say, then it can always be represented by a function $f:\statespace^{n-m+1}\to\reals$ evaluated in the uncertain states $X_{m:n}$; specifically, the inference is represented by the composition $f\circ X_{m:n}$, which we will denote by $f(X_{m:n})$. 
In the sequel, we will call a composite function of this form \emph{finitary}. 
Moreover, for any $n\in\nats$, we denote by $\setofgambles(\statespace^n)$ the set of all functions of the form $f:\statespace^n\to\reals$, and we write $\setoffingambles{}(\samplespace{})\subset\setofgambles(\samplespace)$ for the set of all finitary functions. 
For a finitary function $f(X_{1:n})$, the computation of its expected value reduces to evaluating the finite sum
\begin{equation*}\label{eq:expectation_finitary_function}
\prev{}_\mathrm{P}(f(X_{1:n})\vert\,C) = \sum_{x_{1:n}\in\statespace^{n}}f(x_{1:n})\mathrm{P}(X_{1:n}=x_{1:n}\vert\,C)\,.
\end{equation*}


Let us now move on from the discussion about general uncertain processes, to the special case of Markov chains. An uncertain process $\mathrm{P}$ is said to satisfy the \emph{Markov property} if, for all $n\in\nats$ and all $x_{1:n+1}\in\statespace^{n+1}$, the aforementioned probability assessments simplify in the sense that
\begin{equation*}
\probability{X_{n+1}=x_{n+1}}{X_{1:n}=x_{1:n}} = \probability{X_{n+1}=x_{n+1}}{X_n=x_n}\,.
\end{equation*}
A process that satisfies this Markov property is called a \emph{Markov chain}. Thus, for a Markov chain, the probability that it will visit state $x_{n+1}$ at time $n+1$ is independent of the states $X_{1:n-1}$, given that we know the state $X_n$ at time $n$. 
If the process is moreover \emph{homogeneous}, meaning that $\probability{X_{n+1}=y}{X_n=x}=\probability{X_{2}=y}{X_1=x}$ for all $x,y\in\statespace$ and all $n\in\nats$, then the parameterisation of the process becomes exceedingly simple. 
Indeed, up to the initial distribution $\mathrm{P}(X_1)$---a probability mass function on $\statespace$---the process' behaviour is then fully characterised by a single $\lvert\statespace\rvert \times \lvert\statespace\rvert$ matrix $T$ that is called the transition matrix. It is row-stochastic (meaning that, for all $x\in\statespace$, the $x$-th row $T(x,\cdot)$ of $T$ is a probability mass function on $\statespace$) and its entries satisfy
$T(x,y) = \probability{X_{n+1}=y}{X_n=x}$ for all $x,y\in\statespace$ and $n\in\nats$.
The usefulness of this representation comes from the fact that we can interpret $T$ as a linear operator on the vector space $\setofgambles{(\statespace)}\simeq \reals^{\lvert\statespace\rvert}$, due to the assumption that $\statespace$ is finite. 
For $f\in\setofgambles{(\statespace)}$, this allows us to write the conditional expectation of $f(X_{n+1})$ given $X_n$ as a matrix-vector product: for any $x\in\statespace$, $\prev{}_\mathrm{P}(f(X_{n+1})\vert X_n=x)$ equals
\begin{equation*}
\sum_{y\in\statespace} f(y)\probability{X_{n+1}=y}{X_n=x} = \sum_{y\in\statespace} f(y)T(x,y) = \bigl[Tf\bigr](x).
\end{equation*}

\section{Imprecise Markov chains}\label{section: imprecise markov chains}

Let us now move on to the discussion about \emph{imprecise} Markov chains. Here, we additionally include uncertainty about the model specification, such as uncertainty about the numerical values of the probabilities $\probability{X_{n+1}}{X_{1:n}}$, and about the validity of structural assessments like the Markov property. 

We will start this discussion by regarding the parameterisation of such an imprecise Markov chain. 
We first consider the (imprecise) \emph{initial model} $\mathcal{M}$; this is simply a non-empty set of probability mass functions on $\statespace$ that we will interpret as containing those probabilities that we deem to plausibly describe the process starting in a certain state.
Next, instead of being described by a single transition matrix $T$, an imprecise Markov chain's dynamic behaviour is characterised by an entire \emph{set} $\settrans$ of transition matrices. 
So, each element $T\in\settrans$ is an $\lvert\statespace\rvert\times\lvert\statespace\rvert$ matrix that is row-stochastic. 
In the sequel, we will take $\settrans$ to be fixed, and assume that it is non-empty and that it has separately specified rows. 
This last property is instrumental in ensuring that computations can be performed efficiently, and is therefore often adopted in the literature; see e.g.~\cite{itip:stochasticprocesses} for further discussion. For our present purposes, it suffices to know that it means that $\settrans$ can be completely characterised by providing, for any $x\in\statespace$, a non-empty set $\settrans_x$ of probability mass functions on $\statespace$. In particular, it means that $\settrans$ is the set of all row-stochastic $\lvert\statespace\rvert\times\lvert\statespace\rvert$ matrices $T$ such that, for all $x\in\statespace$, the $x$-row $T(x,\cdot)$ is an element of $\settrans_x$.

Given the sets $\mathcal{M}$ and $\settrans$, the corresponding imprecise Markov chain is defined as the largest \emph{set} $\setofprocesses{\mathcal{M},\settrans{}}$ of stochastic processes that are in a specific sense \emph{compatible} with both $\mathcal{M}$ and $\settrans$. 
In particular, a model $\mathrm{P}$ is said to be compatible with $\mathcal{M}$ if $\mathrm{P}(X_1)\in\mathcal{M}$, and it is said to be compatible with $\settrans$ if, for all $n\in\nats$ and all $x_{1:n}\in\statespace^{n}$, there is some $T\in\settrans$ such that
\begin{equation*}
\probability{X_{n+1}=x_{n+1}}{X_{1:n}=x_{1:n}} = T(x_n,x_{n+1}) \text{ for all } x_{n+1} \in \statespace{}.
\end{equation*} 
Notably, therefore, $\setofprocesses{\mathcal{M},\settrans{}}$ contains all the (precise) homogeneous Markov chains whose characterising transition matrix $T$ is included in $\settrans$, and whose initial distribution $\mathrm{P}(X_1)$ is included in $\mathcal{M}$.
However, in general,  $\setofprocesses{\mathcal{M},\settrans{}}$ clearly also contains models that do not satisfy the Markov property, as well as Markov chains that are not homogeneous.\footnote{Within the field of imprecise probability theory, this model is called an \emph{imprecise Markov chain under epistemic irrelevance}~\cite{deCooman:2009jz,itip:stochasticprocesses,8535240}.}

For such an imprecise Markov chain, we are interested in computing inferences that are in a specific sense robust with respect to variations in the set $\setofprocesses{\mathcal{M},\settrans{}}$. 
Specifically, for any function of interest $f:\samplespace\to\reals$, we consider its (conditional) \emph{lower} and \emph{upper} expectations, which are respectively defined by
\begin{equation*}
\vspace*{-1pt}
\lowprev{}_{\mathcal{M},\settrans{}}(f \vert \,C) \coloneqq \inf_{\mathrm{P} \in \setofprocesses{\mathcal{M},\settrans{}}} \prev{}_\mathrm{P}(f \vert \,C)
\quad\text{and}\quad
\upprev{}_{\mathcal{M},\settrans{}}(f \vert \,C) \coloneqq \sup_{\mathrm{P} \in \setofprocesses{\mathcal{M},\settrans{}}} \prev{}_\mathrm{P}(f \vert \,C)\,.
\vspace*{-1pt}
\end{equation*}
In words, we are interested in computing the tightest possible bounds on the inferences computed for each $\mathrm{P} \in \setofprocesses{\mathcal{M},\settrans{}}$. 
These lower and upper expectations are related through conjugacy, meaning that $\lowprev{}_{\mathcal{M},\settrans{}}(f \vert \,C) = -\upprev{}_{\mathcal{M},\settrans{}}(-f \vert \,C)$, so it suffices to consider only the upper expectations in the remaining discussion; any results for lower expectations follow analogously through this relation.

From a computational point of view, it is also useful to consider the dual representation of the set $\settrans$, given by the \emph{upper transition operator} $\uptrans$ with respect to this set~\cite{deCooman:2009jz,itip:stochasticprocesses}. 
This is a (non-linear) operator that maps $\setofgambles(\statespace)$ into $\setofgambles(\statespace)$; it is defined for any $f\in \setofgambles(\statespace)$ and any $x\in\statespace$ as
\begin{equation*}
\bigl[\uptrans f\bigr](x) \coloneqq \sup_{T(x,\cdot)\in\settrans_x} \sum_{y\in\statespace}T(x,y)f(y)
.
\end{equation*}
So, in order to evaluate $\bigl[\uptrans f\bigr](x)$, one must solve an optimisation problem over the set $\settrans_x$ containing the $x$-rows of the elements of $\settrans$.
In many practical cases, the set $\settrans_x$ is closed and convex and therefore, evaluating $\bigl[\uptrans f\bigr](x)$ is relatively straightforward: for instance, if $\settrans_x$ is described by a finite number of (in)equality constraints, then this problem reduces to a simple linear programming task, which can be solved by standard techniques. We will also make use of the conjugate \emph{lower transition operator} $\lowtrans$, defined by $[\lowtrans\,f](x)\coloneqq -[\uptrans (-f)](x)$ for all $x\in\statespace$ and all $f\in\setofgambles(\statespace)$. Results about upper transition operators translate to results about lower transition operators through this relation; we will focus on the former in the following discussion.

Now, the operator $\uptrans$ can be used for computing upper expectations in much the same way as transition matrices are used for computing expectations with respect to precise Markov chains: for any $n \in \nats{}$, any finitary function $f(X_{n+1})$ and any $x_{1:n}\in\statespace^{n}$ it holds that
\begin{equation}\label{Eq: Markov property}
\upprev{}_{\mathcal{M},\settrans{}}(f(X_{n+1}) \vert X_{1:n}=x_{1:n}) = \bigl[\uptrans f\bigr](x_n);
\end{equation}
see appendix \ref{appendix}.
Observe that the right-hand side in this expression does not depend on the history $x_{1:n-1}$; this can be interpreted as saying that the model satisfies an \emph{imprecise Markov property}, which explains why we call our model an ``imprecise Markov chain''.
Moreover, a slightly more general property holds that will be useful later on:

\begin{proposition}\label{prop:time shift}
Consider the imprecise Markov chain $\setofprocesses{\mathcal{M},\settrans}$. 
For any $m,n \in \nats$ such that $m \leq n$, any function $f \in \setofgambles{}(\statespace{}^{n - m +1})$ and any $x_{1:m-1}\in\statespace^{m-1}$ and $y\in\statespace$, we have that 
\begin{align*}
\upprev{}_{\mathcal{M},\settrans{}}\big(f(X_{m:n}) \big\vert X_{1:m-1} = x_{1:m-1}, X_m=y \big) = \upprev{}_{\mathcal{M},\settrans{}}\big(f(X_{1:n-m+1}) \big\vert X_{1} = y \big).
\end{align*}
\end{proposition}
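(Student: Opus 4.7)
The plan is to show that once we condition on $X_m = y$, the past trajectory $X_{1:m-1} = x_{1:m-1}$ becomes informationally irrelevant for any function of $X_{m:n}$, so that a simple re-indexing of time yields the stated equality. The driving ingredients are the imprecise Markov property from Equation~\eqref{Eq: Markov property} and the separate specification of rows of $\settrans$: together these ensure that the conditional dynamics from time $m$ onwards, given $X_m = y$, only depend on $y$ and on $\settrans$, not on $\mathcal{M}$ or on the preceding trajectory.

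Concretely, I would proceed by (backward) induction on $k := n - m$. The base case $k = 0$ is immediate: both sides reduce to $f(y)$ since the conditioning event fixes the only state on which $f$ depends. For the inductive step, the goal is to ``peel off'' the last time point. Formally, I would invoke (or first establish) a law of iterated upper expectations for $\setofprocesses{\mathcal{M},\settrans{}}$ which, applied at the innermost conditional at time $n$ together with Equation~\eqref{Eq: Markov property}, gives
\[
\upprev{}_{\mathcal{M},\settrans{}}\bigl(f(X_{m:n}) \big\vert X_{1:m-1}=x_{1:m-1}, X_m = y\bigr) = \upprev{}_{\mathcal{M},\settrans{}}\bigl(g(X_{m:n-1}) \big\vert X_{1:m-1}=x_{1:m-1}, X_m = y\bigr),
\]
where $g(x_{m:n-1}) := [\uptrans f(x_{m:n-1},\cdot)](x_{n-1})\in\setofgambles(\statespace^{n-m})$. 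Applying the induction hypothesis to $g$ removes the dependence on $x_{1:m-1}$ on the left and, by a trivial re-indexing of time coordinates $j \leftrightarrow j - m + 1$, matches the corresponding recursion produced by running the same argument on the right-hand side. An equivalent and slightly cleaner packaging is to prove the full backward recursion once and for all: both sides collapse to $g_m(y)$ and $\tilde g_1(y)$, respectively, where the sequences $g_k$ and $\tilde g_k$ are built by iterating $\uptrans$ from the terminal time backwards, and they agree by construction up to a shift of indices.

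The main obstacle is the validity of this law of iterated upper expectations for $\setofprocesses{\mathcal{M},\settrans{}}$: it is not automatic that the supremum over a set of (non-Markov, non-homogeneous) processes commutes with conditional expectation in the way suggested above. Separate specification of rows of $\settrans$ is the crucial property that makes it work, because it guarantees that the choice of the row at time $n$ from state $x_{n-1}$ can be optimised independently of the choices made at earlier time points, thereby factoring the supremum and yielding exactly $[\uptrans f(x_{m:n-1},\cdot)](x_{n-1})$ at the inner step. Once this factorisation is secured---which is essentially the recursive decomposition that motivates the entire paper, and presumably occupies the appendix alongside Equation~\eqref{Eq: Markov property}---the induction and re-indexing are straightforward.
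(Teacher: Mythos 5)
Your proposal is correct and follows essentially the same route as the paper: the paper's own proof first establishes (by backward induction on the terminal time, using the law of iterated upper expectations, Equation~\eqref{Eq: Markov property} and the extended operator $\uptrans$) that $\upprev{}_{\mathcal{M},\settrans{}}(f(X_{m:n}) \vert X_{1:m}) = [\uptrans{}^{(n-m)} f](X_m)$, and then observes that both sides of the proposition collapse to $[\uptrans{}^{(n-m)} f](y)$ --- which is exactly the ``cleaner packaging'' you describe. You also correctly identify the law of iterated upper expectations (and the separately specified rows behind it) as the one non-trivial ingredient, which the paper likewise treats as a standalone result rather than reproving inside this proposition.
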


Finally, we remark that, for any $m,n \in \nats$ such that $m \leq n$, a conditional upper expectation $\upprev{}_{\mathcal{M},\settrans{}}\bigl(f \big\vert X_{m:n}\bigr)$ is itself a (finitary) function depending on the states $X_{m:n}$. Using this observation, we can now introduce the \emph{law of iterated upper expectations}, which will form the basis of the algorithms developed in the following sections:

\begin{theorem}\label{Theorem: law of iterated}
Consider the imprecise Markov chain $\setofprocesses{\mathcal{M},\settrans}$. 
For all $m \in \natz$, all $k \in \nats$ and all $f \in \setoffingambles{}(\samplespace{})$, we have that
\begin{align*}
\upprev{}_{\mathcal{M},\settrans{}}\bigl(f \big\vert X_{1:m} \bigr) = \upprev{}_{\mathcal{M},\settrans{}}\Big(\upprev{}_{\mathcal{M},\settrans{}}\bigl(f \big\vert X_{1:m+k} \bigr) \Big\vert X_{1:m} \Big).
\end{align*}
\end{theorem}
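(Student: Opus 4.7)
The plan is to prove the theorem by induction on $k\in\nats$; the base case $k=1$ carries essentially all the content, and the inductive step is a short algebraic manipulation.

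\emph{Base case ($k=1$).} Fix $x_{1:m}\in\statespace^m$ and define $h(X_{1:m+1})\coloneqq\upprev{}_{\mathcal{M},\settrans}(f|X_{1:m+1})$; I want $\upprev{}_{\mathcal{M},\settrans}(f|X_{1:m}=x_{1:m}) = \upprev{}_{\mathcal{M},\settrans}(h|X_{1:m}=x_{1:m})$. For $\leq$, fix any $P\in\setofprocesses{\mathcal{M},\settrans}$; the classical (precise) law of iterated expectations gives
$$
\prev{}_P(f|X_{1:m}=x_{1:m}) = \sum_{y\in\statespace}\prev{}_P(f|X_{1:m+1}=(x_{1:m},y))\,P(X_{m+1}=y|X_{1:m}=x_{1:m})\,.
$$
Upper-bounding each inner expectation by $h(x_{1:m},y)$ and using that the one-step law $P(X_{m+1}=\cdot|X_{1:m}=x_{1:m})$ lies in $\settrans_{x_m}$ (by compatibility of $P$ with $\settrans$), this is at most $[\uptrans h(x_{1:m},\cdot)](x_m)$, which by~\eqref{Eq: Markov property} equals $\upprev{}_{\mathcal{M},\settrans}(h|X_{1:m}=x_{1:m})$. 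Taking the supremum over $P$ closes this direction.

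The reverse inequality ($\geq$) is the main obstacle and requires a \emph{gluing} argument. Given $\epsilon>0$, select an outer process $P^{\mathrm{out}}\in\setofprocesses{\mathcal{M},\settrans}$ whose one-step law $q(\cdot)=P^{\mathrm{out}}(X_{m+1}=\cdot|X_{1:m}=x_{1:m})\in\settrans_{x_m}$ satisfies $\sum_{y}q(y)h(x_{1:m},y)\geq\upprev{}_{\mathcal{M},\settrans}(h|X_{1:m}=x_{1:m})-\epsilon/2$; and, for each $y\in\statespace$, pick $P^y\in\setofprocesses{\mathcal{M},\settrans}$ that (after the shift provided by Proposition~\ref{prop:time shift}, which rewrites the supremum defining $h(x_{1:m},y)$ as one conditioned on $X_1=y$) nearly attains that supremum. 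Splice these into a process $\tilde P$ that agrees with $P^{\mathrm{out}}$ through time $m+1$ and whose behaviour on $X_{m+2:\infty}$, conditional on $X_{m+1}=y$, matches the time-shifted $P^y$. The separately-specified-rows property of $\settrans$ ensures every one-step conditional of $\tilde P$ is a row of some element of $\settrans$, so $\tilde P\in\setofprocesses{\mathcal{M},\settrans}$; expanding $\prev{}_{\tilde P}(f|X_{1:m}=x_{1:m})$ and letting $\epsilon\to 0$ yields the bound. Verifying that the spliced process is genuinely a member of $\setofprocesses{\mathcal{M},\settrans}$ (in particular, that its initial distribution and every conditional respect the compatibility constraints) is the step I anticipate as most delicate.

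\emph{Inductive step.} Assume the identity for $k-1$, and set $h\coloneqq\upprev{}_{\mathcal{M},\settrans}(f|X_{1:m+k})$, which is again finitary. The base case applied at time $m+k-1$ to $f$ yields $\upprev{}_{\mathcal{M},\settrans}(f|X_{1:m+k-1})=\upprev{}_{\mathcal{M},\settrans}(h|X_{1:m+k-1})$. Applying $\upprev{}_{\mathcal{M},\settrans}(\cdot|X_{1:m})$ to both sides and then invoking the induction hypothesis once on each side (to $f$ on the left, to $h$ on the right, both iterating from $m$ to $m+k-1$) gives $\upprev{}_{\mathcal{M},\settrans}(f|X_{1:m})=\upprev{}_{\mathcal{M},\settrans}(h|X_{1:m})$, which is the claim.
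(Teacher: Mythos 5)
Your route is genuinely different from the paper's. The paper does not prove the one-step case from scratch: it invokes \cite[Theorem 21]{8535240} (a backward-recursion formula for $\upprev{}_{\mathbb{P}}$ in a more general setting), checks that $\setofprocesses{\mathcal{M},\settrans{}}$ is an instance of that setting, and then obtains the theorem by writing the nested decomposition of $\upprev{}_{\mathcal{M},\settrans{}}(f\vert X_{1:m})$ twice and splicing. All of the hard work---precisely the two-sided inequality you prove in your base case, and in particular the gluing construction for the $\geq$ direction---is thereby outsourced to the cited reference. Your proof is the self-contained, first-principles version: base case by a two-sided bound (the $\leq$ direction via the precise tower property plus $q\in\settrans_{x_m}$ and Equation~\eqref{Eq: Markov property}, the $\geq$ direction via an $\epsilon$-optimal splice), then a clean induction on $k$. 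That is a legitimate and in some ways more informative argument; what it costs you is exactly the compatibility verification for the spliced process that you flag as delicate. That verification does go through: since $f$ is finitary, $\prev{}_{\tilde P}(f\vert X_{1:m}=x_{1:m})$ depends on only finitely many of $\tilde P$'s one-step conditionals, each of which you place in the appropriate $\settrans_{x}$ by construction, and the separately-specified-rows property lets you fill in all remaining conditionals arbitrarily from $\settrans$. You should also record explicitly that $h=\upprev{}_{\mathcal{M},\settrans{}}(f\vert X_{1:m+k})$ is bounded and real-valued (via \ref{coherence: monotonicity} and \ref{coherence: const is const}, as the paper does), since both your $\epsilon$-bookkeeping and the application of the induction hypothesis to $h$ require $h$ to be a genuine finitary function.

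One concrete defect as written: your appeal to Proposition~\ref{prop:time shift} inside the base case is circular relative to the paper's logical structure, because the paper derives Proposition~\ref{prop:time shift} from Proposition~\ref{prop: iterated T}, whose proof uses Theorem~\ref{Theorem: law of iterated}. The fix is easy and actually simplifies your argument: do not time-shift at all. Choose $P^y$ to $\epsilon$-nearly attain the supremum defining $h(x_{1:m},y)=\sup_{\mathrm{P}}\prev{}_{\mathrm{P}}(f\vert X_{1:m+1}=(x_{1:m},y))$ directly, and splice in $P^y$'s conditionals on histories extending $(x_{1:m},y)$ verbatim; then $\prev{}_{\tilde P}(f\vert X_{1:m+1}=(x_{1:m},y))=\prev{}_{P^y}(f\vert X_{1:m+1}=(x_{1:m},y))$ with no shift needed, and no forward reference to Proposition~\ref{prop:time shift} is required. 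With that repair, the argument is sound.
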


\section{A recursive inference algorithm}\label{section: computing inferences}



In principle, for any function $f\in\setofgambles{}(\statespace{}^n)$ with $n \in \nats{}$, the upper expectations of $f(X_{1:n})$ can be obtained by maximising $\prev{}_\mathrm{P}(f(X_{1:n}))$ over the set $\setofprocesses{\mathcal{M},\settrans{}}$ of all precise models $\mathrm{P}$ that are compatible with $\mathcal{M}$ and $\settrans{}$.
Since this will almost always be infeasible if $n$ is large, we usually apply the law of iterated upper expectations in combination with the Markov property in order to divide the optimisation problem into multiple smaller ones.
Indeed, because of Theorem \ref{Theorem: law of iterated}, we have that
\begin{align*}
\upprev{}_{\mathcal{M},\settrans{}}(f(X_{1:n})) = \upprev{}_{\mathcal{M},\settrans{}}\Big(\upprev{}_{\mathcal{M},\settrans{}}(f(X_{1:n})\vert X_{1:n-1}) \Big).
\end{align*}
Using Equation~\eqref{Eq: Markov property}, one can easily show that $\upprev{}_{\mathcal{M},\settrans{}}(f(X_{1:n})\vert X_{1:n-1})$ can be computed by evaluating $[\uptrans{} f(x_{1:n-1} \cdot)](x_{n-1})$ for all $x_{1:n-1} \in \statespace{}^{n-1}$.
Here, $f(x_{1:n-1} \cdot)$ is the function in $\setofgambles{}(\statespace{})$ that takes the value $f(x_{1:n})$ on $x_n \in \statespace{}$.
This accounts for \smash{${\vert \statespace{} \vert}^{n-1}$} optimisation problems to be solved.
With the acquired function $f'(X_{1:n-1}) \coloneqq \upprev{}_{\mathcal{M},\settrans{}}(f(X_{1:n})\vert X_{1:n-1})$, we can then compute the upper expectation $\smash \upprev{}_{\mathcal{M},\settrans{}}(f'(X_{1:n-1})\vert X_{1:n-2})$ in a similar way, by solving \smash{${\vert \statespace{} \vert}^{n-2}$} optimisation problems.
Continuing in this way, we end up with a function that only depends on $X_1$ and for which the expectation needs to be maximised over the initial models in $\mathcal{M}$.
Hence, in total, $\sum_{i = 0}^{n-1} {\vert \statespace{} \vert}^{i}$ optimisation problems need to be solved in order to obtain $\upprev{}_{\mathcal{M},\settrans{}}(f(X_{1:n}))$.
Although these optimisation problems are relatively simple and therefore feasible to solve individually, the total number of required iterations is still exponential in $n$, therefore making the computation of $\upprev{}_{\mathcal{M},\settrans{}}(f(X_{1:n}))$ intractable when $n$ is large.

In many cases, however, $f(X_{1:n})$ can be recursively decomposed in a specific way allowing for a much more efficient computational scheme to be employed; see Theorem~\ref{theorem: algorithm} further on. 
Before we present this scheme in full generality, let us first provide some intuition about its basic working principle.


So assume we are interested in $\upprev{}_{\mathcal{M},\settrans{}}(f(X_{1:n}))$, which, according to Theorem~\ref{Theorem: law of iterated}, can be obtained by maximising $\prev{}_{\mathrm{P}}(\upprev{}_{\mathcal{M},\settrans{}}(f(X_{1:n})\vert X_1))$ over $\mathrm{P}(X_1)\in\mathcal{M}$.
The problem then reduces to the question of how to compute $\upprev{}_{\mathcal{M},\settrans{}}(f(X_{1:n})\vert X_1)$ efficiently.
Suppose now that $f(X_{1:n})$ takes the following form:
\begin{equation}\label{Eq: structure function}
f(X_{1:n})=g(X_1)+h(X_1)\tau(X_{2:n}), 
\end{equation}
for some $g, h \in \setofgambles{}(\statespace{})$ and some $\tau \in \setofgambles{}(\statespace{}^{n-1})$.
Then, because $\upprev{}_{\mathcal{M},\settrans{}}$ is a supremum over linear expectations, we find that
\begin{align*}
\upprev{}_{\mathcal{M},\settrans{}}(f(X_{1:n})\vert X_1) = g(X_1)+h(X_1) \upprev{}_{\mathcal{M},\settrans{}}(\tau(X_{2:n})\vert X_1),
\end{align*}
where, for the sake of simplicity, we assumed that $h$ does not take negative values.
Then, by appropriately combining Proposition \ref{prop:time shift} with Theorem \ref{Theorem: law of iterated}, one can express $\upprev{}_{\mathcal{M},\settrans{}}(\tau(X_{2:n})\vert X_1)$ in terms of $\overline{\Upsilon}\colon\statespace\to\reals$, defined by  
\begin{equation*}
\overline{\Upsilon}(x)\coloneqq\upprev{}_{\mathcal{M},\settrans{}}(\tau(X_{1:n-1})\vert X_1=x)\text{ for all }x \in \statespace{}. 
\end{equation*}
In particular, we find that
\begin{align*}
\upprev{}_{\mathcal{M},\settrans{}}(\tau(X_{2:n})\vert X_1) 
= \upprev{}_{\mathcal{M},\settrans{}}\big(\upprev{}_{\mathcal{M},\settrans{}}(\tau(X_{2:n})\vert X_{1:2}) \vert X_1\big) 
&= \upprev{}_{\mathcal{M},\settrans{}}\big(\overline{\Upsilon}(X_2)\vert X_1\big) \\
 &= [\uptrans{} \, \overline{\Upsilon}](X_1), 
\end{align*}
where the equalities follow from Theorem~\ref{Theorem: law of iterated}, Proposition~\ref{prop:time shift} and Equation~\eqref{Eq: Markov property}, respectively.
So $\upprev{}_{\mathcal{M},\settrans{}}(f(X_{1:n})\vert X_1)$ can be obtained from $\overline{\Upsilon}$ by solving a single optimisation problem, followed by a pointwise multiplication and summation.

Now, by repeating the structural assessment \eqref{Eq: structure function} in a recursive way, we can generate a whole class of functions for which the upper expectations can be computed using the principle illustrated above.
We start with a function $\tau_1(X_1)$, with $\tau_1\in\setofgambles{}(\statespace{})$, that only depends on the initial state. 
The upper expectation $\upprev{}_{\mathcal{M},\settrans{}}(\tau_1(X_1) \vert X_1)$ is then trivially equal to $\tau_1(X_1)$.
Next, consider $\tau_2(X_{1:2})=g_1(X_1)+h_1(X_1)\tau_1(X_{2})$ for some $g_1, h_1$ in $\setofgambles{}(\statespace{})$. 
$\upprev{}_{\mathcal{M},\settrans{}}(\tau_2(X_{1:2}) \vert X_1)$ is then given by $g_1(X_1)+h_1(X_1) [\uptrans{}\,\overline{\Upsilon}_1](X_{1})$, where we let $\overline{\Upsilon}_1(x)\coloneqq\upprev{}_{\mathcal{M},\settrans{}}(\tau_1(X_1) \vert X_1=x)=\tau_1(x)$ for all $x\in\statespace$ and where we (again) neglect the subtlety that $h_1$ can take negative values.
Continuing in this way, step by step considering new functions constructed by multiplication and summation with functions that depend on an additional time instance, and no longer ignoring the fact that the functions involved can take negative values, we end up with the following result.

\begin{theorem}\label{theorem: algorithm}
Consider any imprecise Markov chain $\setofprocesses{\mathcal{M},\settrans{}}$ 
and two sequences of functions $\{g_n\}_{n \in \natz{}}$ and $\{h_n\}_{n \in \nats{}}$ in $\setofgambles{}(\statespace{})$. Define $\tau_1(x_1) \coloneqq g_0(x_1)$ for all $x_1 \in \statespace$, and for all $n\in\nats$, let
\begin{align*}
\tau_{n+1}(x_{1:n+1}) \coloneqq h_n(x_1) \tau_{n}(x_{2:n+1})+ g_n(x_1) &\text{ for all } x_{1:n+1} \in \statespace{}^{n+1}.
\end{align*}
If we write $\{\overline{\Upsilon}_n\}_{n \in \nats{}}$ and $\{\underline{\Upsilon}_n\}_{n \in \nats{}}$ to denote the sequences of functions in $\setofgambles{}(\statespace{})$ that are respectively defined by $\overline{\Upsilon}_n(x) \coloneqq \upprev{}_{\mathcal{M},\settrans{}}(\tau_n(X_{1:n}) \vert X_1 = x)$ and $\underline{\Upsilon}_n(x) \coloneqq \lowprev{}_{\mathcal{M},\settrans{}}(\tau_n(X_{1:n}) \vert X_1 = x)$ for all $x \in \statespace{}$ and all $n \in \nats{}$, then $\{\overline{\Upsilon}_n\}_{n \in \nats{}}$ and $\{\underline{\Upsilon}_n\}_{n \in \nats{}}$ satisfy the following recursive expressions:
\begin{align*}
\begin{cases}
\overline{\Upsilon}_1 = \underline{\Upsilon}_1 = g_0;\\
\overline{\Upsilon}_{n+1} = h_n \indica{h_n \geq 0}[\uptrans{} \, \overline{\Upsilon}_{n}] + h_n \indica{h_n < 0}[\lowtrans{} \, \underline{\Upsilon}_{n}] + g_n \text{ for all } n \in \nats{};\\
\underline{\Upsilon}_{n+1} = h_n \indica{h_n \geq 0}[\lowtrans{} \, \underline{\Upsilon}_{n}] + h_n \indica{h_n < 0}[\uptrans{} \, \overline{\Upsilon}_{n}] + g_n \text{ for all } n \in \nats{}.
\end{cases}
\end{align*}
\end{theorem}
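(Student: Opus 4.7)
The plan is to prove both recursions simultaneously by induction on $n$, with the lower case following the upper case by essentially the same argument (or by conjugacy applied carefully). The base case $n=1$ is immediate: since $\tau_1(X_{1:1}) = g_0(X_1)$ is a finitary function of $X_1$ alone, $\overline{\Upsilon}_1(x) = \upprev{}_{\mathcal{M},\settrans{}}(g_0(X_1) \vert X_1 = x) = g_0(x)$, and the same holds for $\underline{\Upsilon}_1$.

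For the inductive step, the starting point is to rewrite
\begin{equation*}
\overline{\Upsilon}_{n+1}(x) = \upprev{}_{\mathcal{M},\settrans{}}\bigl(h_n(X_1)\,\tau_n(X_{2:n+1}) + g_n(X_1) \,\big\vert\, X_1 = x\bigr).
\end{equation*}
Since we condition on $X_1 = x$, the quantities $h_n(x)$ and $g_n(x)$ behave as constants. I would then invoke the standard positive homogeneity and constant additivity of upper expectations (which follow directly from the definition as a supremum of linear expectations), together with the conjugacy relation $\upprev{}(-f\vert C) = -\lowprev{}(f\vert C)$ to handle negative multipliers. This gives the case split
\begin{equation*}
\overline{\Upsilon}_{n+1}(x) =
\begin{cases}
h_n(x)\,\upprev{}_{\mathcal{M},\settrans{}}(\tau_n(X_{2:n+1}) \vert X_1 = x) + g_n(x) & \text{if } h_n(x) \geq 0,\\
h_n(x)\,\lowprev{}_{\mathcal{M},\settrans{}}(\tau_n(X_{2:n+1}) \vert X_1 = x) + g_n(x) & \text{if } h_n(x) < 0,
\end{cases}
\end{equation*}
which, when rewritten using the indicator functions $\indica{h_n \geq 0}$ and $\indica{h_n < 0}$, exactly matches the structure in the claim modulo expressing the inner upper/lower expectation as $[\uptrans{} \, \overline{\Upsilon}_n](x)$ and $[\lowtrans{} \, \underline{\Upsilon}_n](x)$, respectively.

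The remaining step is to compute $\upprev{}_{\mathcal{M},\settrans{}}(\tau_n(X_{2:n+1}) \vert X_1 = x)$. Here I would chain the three tools provided by the paper: first apply the law of iterated upper expectations (Theorem~\ref{Theorem: law of iterated}) to condition on $X_{1:2}$, obtaining
\begin{equation*}
\upprev{}_{\mathcal{M},\settrans{}}(\tau_n(X_{2:n+1}) \vert X_1 = x) = \upprev{}_{\mathcal{M},\settrans{}}\bigl( \upprev{}_{\mathcal{M},\settrans{}}(\tau_n(X_{2:n+1})\vert X_{1:2}) \,\big\vert\, X_1 = x \bigr).
\end{equation*}
Then apply the time-shift invariance of Proposition~\ref{prop:time shift} with $m = 2$ to rewrite the inner conditional upper expectation, evaluated at $X_{1:2} = (x, y)$, as $\upprev{}_{\mathcal{M},\settrans{}}(\tau_n(X_{1:n}) \vert X_1 = y) = \overline{\Upsilon}_n(y)$ by the inductive hypothesis (or simply by definition of $\overline{\Upsilon}_n$). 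Finally, Equation~\eqref{Eq: Markov property} identifies $\upprev{}_{\mathcal{M},\settrans{}}(\overline{\Upsilon}_n(X_2) \vert X_1 = x)$ with $[\uptrans{} \, \overline{\Upsilon}_n](x)$. Completely analogous manipulations, with $\uptrans{}$ replaced by $\lowtrans{}$ and $\overline{\Upsilon}_n$ by $\underline{\Upsilon}_n$, handle the lower case.

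The only real subtlety I anticipate is the sign-dependent case split on $h_n(x)$: because the sign of $h_n$ can vary with $x$, neither a purely ``upper'' nor purely ``lower'' recursion suffices, which is precisely why the theorem couples the two sequences $\overline{\Upsilon}_n$ and $\underline{\Upsilon}_n$. Writing the case split uniformly through the indicators $\indica{h_n \geq 0}$ and $\indica{h_n < 0}$ is mainly notational, but one should explicitly verify that the resulting pointwise expressions coincide on $\{x : h_n(x) = 0\}$ (they do, as the $h_n$ factor annihilates the ambiguity). Everything else in the argument is a routine combination of Theorem~\ref{Theorem: law of iterated}, Proposition~\ref{prop:time shift}, Equation~\eqref{Eq: Markov property}, and the basic coherence properties (positive homogeneity, constant additivity, conjugacy) of upper and lower expectations.
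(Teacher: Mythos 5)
Your proposal is correct and follows essentially the same route as the paper's proof: the same decomposition via \ref{coherence: conditioning}, \ref{coherence: const. additiv.}, \ref{coherence: Homogeneity} and conjugacy to obtain the sign-dependent case split, followed by the same chain of Theorem~\ref{Theorem: law of iterated}, Proposition~\ref{prop:time shift} and Equation~\eqref{Eq: Markov property} to identify the inner expectation with $[\uptrans\,\overline{\Upsilon}_n](x)$. The only cosmetic differences are that the paper does not need an induction (the recursion is verified directly for each $n$, since $\overline{\Upsilon}_n$ and $\underline{\Upsilon}_n$ are defined non-recursively, as you yourself note parenthetically) and that it formalises your ``completely analogous'' lower case by applying the already-proven upper recursion to $\tau_n' \coloneqq -\tau_n$ with $g_n' \coloneqq -g_n$ and $h_n' \coloneqq h_n$.
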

Here, we used $\indica{h_n \geq 0} \in \setofgambles{}(\statespace{})$ to denote the indicator of $\{x \in \statespace{} \colon h_n(x) \geq 0\}$, and similarly for $\indica{h_n < 0} \in \setofgambles{}(\statespace{})$.
Note that, because we now need to evaluate both $\uptrans$ and $\lowtrans$ for every iteration, we will in general need to solve $2(n-1)\vert\statespace\vert$ optimisation problems to obtain $\upprev{}_{\mathcal{M},\settrans{}}(\tau_n(X_{1:n}) \vert X_1)$ and $\lowprev{}_{\mathcal{M},\settrans{}}(\tau_n(X_{1:n})\vert X_1)$ for some $n \in \nats{}$.
In order to obtain the unconditional inferences $\upprev{}_{\mathcal{M},\settrans{}}(\tau_n(X_{1:n}))$ and $\lowprev{}_{\mathcal{M},\settrans{}}(\tau_n(X_{1:n}))$, it then suffices to respectively maximise and minimise the expectations of $\upprev{}_{\mathcal{M},\settrans{}}(\tau_n(X_{1:n}) \vert X_1)$ and $\lowprev{}_{\mathcal{M},\settrans{}}(\tau_n(X_{1:n})\vert X_1)$ over all initial models in $\mathcal{M}$.

\section{Special cases}

To illustrate the practical relevance of our method, we now discuss a number of important inferences that fall within its scope.
As already mentioned in the introduction section, in some of these cases, our method simplifies to a computational scheme that was already developed earlier in a more specific context.
The strength of our present contribution, therefore, lays in its unifying character and the level of generality to which it extends.

\vspace*{3pt}
\noindent
\textbf{Functions that depend on a single time instant.}
As a first, very simple inference we can consider the upper and lower expectation of a function $f(X_n)$, for some $f \in \setofgambles{}(\statespace{})$ and $n \in \nats{}$, conditional on the initial state. The expressions for these inferences are given by $\uptrans{}^{n-1} f$ and $\lowtrans{}^{n-1} f$, respectively~\cite{deCooman:2009jz}. For instance, for any $x\in\statespace$, $\lowprev{}_{\mathcal{M},\settrans{}}(f(X_5)) \vert X_1 = x)=[\lowtrans{}^{4}f](x)$. 
These expressions can also easily be obtained from Theorem~\ref{theorem: algorithm}, by setting $g_0 \coloneqq f$ and, for all $k \in \{1,\cdots,n-1\}$, $g_k \coloneqq 0$ and $h_k \coloneqq 1$.

\vspace*{3pt}
\noindent
\textbf{Sums of functions.}
One can also use our method to compute upper and lower expectations of sums $\sum_{k=1}^n f_k(X_k)$ of functions $f_k \in \setofgambles{}(\statespace{})$.
Then we would have to set $g_0 \coloneqq f_n$ and, for all $k \in \{1,\cdots,n-1\}$, $g_{k} \coloneqq f_{n-k}$ and $h_{k} \coloneqq 1$.
Although we allow the functions $f_k$ to depend on $k$, it is worth noting that, if we set them all equal to the same function $f$, our method can also be employed to compute the upper and lower expectation of the time average $\sfrac{1}{n} \sum_{k=1}^n f(X_k)$ of $f$ over the time interval $n$.
The subtlety of the constant factor $\sfrac{1}{n}$ does not raise a problem here, because upper and lower expectations are homogeneous with respect to non-negative scaling.

\vspace*{3pt}
\noindent
\textbf{Product of functions.}
Another interesting class of inferences are those that can be represented by a product $\prod_{k=1}^n f_k(X_k)$ of functions $f_k \in \setofgambles{}(\statespace{})$.
To compute upper and lower expectations of such functions, it suffices to set $g_0 \coloneqq f_n$ and, for all $k \in \{1,\cdots,n-1\}$, $g_{k} \coloneqq 0$ and $h_{k} \coloneqq f_{n-k}$.
A typical example of an inference than can be described in this way is the probability that the state will be in a set $A \subseteq \statespace{}$ during a certain time interval.
For instance, the upper expectation of the function $\indica{A}(X_1)\indica{A}(X_2)$ gives us a tight upper bound on the probability that the state will be in $A$ during the first two time instances.

\vspace*{3pt}
\noindent
\textbf{Hitting probabilities.}
The hitting probability of some set $A \subseteq \statespace{}$ over a finite time interval $n$ is the probability that the state $X_k$ will be in $A$ somewhere within the first $n$ time instances. 
The upper and lower bounds on such a hitting probability are equal to the upper and lower expectation of the function $f(X_{1:n}) \coloneqq \indica{A'_n} \in \setofgambles(\samplespace{})$, where $A'_n \coloneqq \{\omega \in \samplespace{} \colon (\exists k \leq n) \, \omega_k \in A \}$.
Note that $f(X_{1:n})$ can be decomposed in the following way:
\vspace*{-9pt}
\begin{align*}
f(X_{1:n}) = \indica{A}(X_1) + \indica{A}(X_2) \indica{A^c}(X_1) + \cdots + \indica{A}(X_n) \prod_{k = 1}^{n-1} \indica{A^c}(X_k) \vspace*{-10pt}
\end{align*} 
Hence, these inferences can be obtained using Theorem~\ref{theorem: algorithm} if we let $g_0 \coloneqq \indica{A}$ and, for all $k \in \{1,\cdots,n-1\}$, $g_k \coloneqq \indica{A}$ and $f_k \coloneqq \indica{A^c}$.
Additionally, one could also be interested in the probability that the state $X_k$ will \emph{ever} be in $A$. 
Upper and lower bounds on this probability are given by the upper and lower expectation of the function $f \coloneqq \indica{A'} \in \setofgambles(\samplespace{})$ where $A' \coloneqq \{\omega \in \samplespace{} \colon (\exists k \in \nats{}) \, \omega_k \in A \}$.
Since the function $f$ is non-finitary, we are unable to apply our method in a direct way.
However, it is shown in \cite[Proposition 16]{Krak2019HittingTimesISIPTA} that, if the set $\settrans{}$ is convex and closed, the upper and lower bounds on the hitting probability over a finite time interval converge to the upper and lower bounds on the hitting probability over an infinite time interval, therefore allowing us to approximate these inferences by choosing $n$ sufficiently large.

\vspace*{3pt}
\noindent
\textbf{Hitting times.}
The \emph{hitting time} of some set $A \subseteq \statespace{}$ is defined as the time $\tau$ until the state is in $A$ for the first time; so $\tau(\omega) \coloneqq \inf\{k \in \natz{} \colon \omega_{k} \in A \}$ for all $\omega \in \samplespace{}$.
Once more, the function $\tau$ is non-finitary, necessitating an indirect approach to the computation of its upper and lower expectation.
This can be done in a similar way as we did for the case of hitting probabilities, now considering the finitary functions $\tau_n(X_{1:n})$, where $\tau_n \in \setofgambles{}(\statespace{}^n)$ is defined by $\tau_n(x_{1:n}) \coloneqq \inf\{k \in \nats{} \colon x_{k} \in A\}$ if $\{k \in \nats{} \colon x_{k} \in A\}$ is non-empty, and $\tau_n(x_{1:n}) \coloneqq n+1$ otherwise, for all $n \in \nats{}$ and all $x_{1:n} \in \statespace{}^{n}$.
These functions correspond to choosing $g_0 \coloneqq \indica{A^c}$ and, for all $k \in \{1,\cdots,n-1\}$, $g_k \coloneqq \indica{A^c}$ and $f_k \coloneqq \indica{A^c}$.
If the set $\settrans{}$ is convex and closed, the upper and lower expectations of these functions for large $n$ will then approximate those of the non-finitary hitting time \cite[Proposition 10]{Krak2019HittingTimesISIPTA}.

\section{Discussion}

The main contribution of this paper is a single, unified method to efficiently compute a wide variety of inferences for imprecise Markov chains; see Theorem~\ref{theorem: algorithm}.
The set of functions describing these inferences is however restricted to the finitary type, and therefore a general approach for inferences characterised by non-finitary functions is still lacking.
In some cases, however, as we already mentioned in our discussion of hitting probabilities and hitting times, this issue can be addressed by relying on a continuity argument.

Indeed, consider any function $f = \lim_{n \to +\infty} \tau_n(X_{1:n})$ that is the pointwise limit of a sequence $\{\tau_n(X_{1:n})\}_{n \in \nats{}}$ of finitary functions, defined recursively as in Theorem~\ref{theorem: algorithm}.
If $\upprev{}_{\mathcal{M},\settrans{}}$ is continuous with respect to $\{\tau_n(X_{1:n})\}_{n \in \nats{}}$, meaning that $\lim_{n \to +\infty} \upprev{}_{\mathcal{M},\settrans{}}(\tau_n(X_{1:n})) = \upprev{}_{\mathcal{M},\settrans{}}( f)$, the inference $\upprev{}_{\mathcal{M},\settrans{}}(f)$ can then be approximated by $\upprev{}_{\mathcal{M},\settrans{}}(\tau_n(X_{1:n}))$ for sufficiently large $n$.
Since we can recursively compute $\upprev{}_{\mathcal{M},\settrans{}}(\tau_n(X_{1:n}))$ for any $n \in \nats{}$ using the methods discussed at the end of Section~\ref{section: computing inferences}, this yields an efficient way of approximating $\upprev{}_{\mathcal{M},\settrans{}}(f)$.
A completely analogous argument can be used for the lower expectation $\lowprev{}_{\mathcal{M},\settrans{}}(f)$.
This begs the question whether the upper and lower expectations $\upprev{}_{\mathcal{M},\settrans{}}$ and $\lowprev{}_{\mathcal{M},\settrans{}}$ satisfy the appropriate continuity properties for this to work.

Unfortunately, results about the continuity properties of these operators are rather scarce
---especially compared to their precise counterparts---and depend on the formalism that is adopted.
In this paper, for didactical reasons, we have considered one formalism: we have introduced imprecise Markov chains as being \emph{sets} of ``precise'' models that are in a specific sense compatible with the given set $\settrans$. 
It is however important to realise that there is also an entirely different formalisation of imprecise Markov chains that is instead based on the game-theoretic probability framework that was popularised by Shafer and Vovk; we refer to \cite{Shafer:2005wx,Tjoens2019NaturalExtensionISIPTA} for details.  
It is well known that the inferences produced under these two different frameworks agree for finitary functions \cite{deCooman:2008km,8535240}, so the method described by Theorem~\ref{theorem: algorithm} is also applicable when working in a game-theoretic framework.
The continuity properties of the game-theoretic upper and lower expectations, however, are not necessarily the same as those of the measure-theoretic operators that we considered here. So far, the continuity properties of game-theoretic upper and lower expectations are better understood \cite{Shafer:2005wx,Tjoens2019NaturalExtensionISIPTA,Tjoens2019ContinuityARXIV}, making these operators more suitable if we plan to employ the continuity argument above.

\newpage

\section*{Acknowledgments}
The work in this paper was partially supported by H2020-MSCA-ITN-2016 UTOPIAE, grant agreement 722734.

\bibliographystyle{plain}
\bibliography{references}

\appendix
\section{Appendix A}\label{appendix}


\begin{proposition}
Consider any imprecise Markov chain $\setofprocesses{\mathcal{M},\settrans{}}$. 
Then, for all $m,n\in\nats$ with $m \leq n$, all $f,g\in\setofgambles(\statespace^{n})$, all $x_{1:m}\in\statespace^{m}$, and all $\mu,\lambda\in\reals$ with $\lambda\geq 0$, it holds that
\begin{enumerate}[leftmargin=*,itemsep=2pt,ref={\upshape C\arabic*},label={\upshape C\arabic*}.]
	\item \label{coherence: Homogeneity} $\upprev{}_{\mathcal{M},\settrans{}}(\lambda f(X_{1:n})\,\vert\,X_{1:m}=x_{1:m}) = \lambda \upprev{}_{\mathcal{M},\settrans{}}( f(X_{1:n})\,\vert\,X_{1:m}=x_{1:m})$;
	\item \label{coherence: const. additiv.} $\upprev{}_{\mathcal{M},\settrans{}}(f(X_{1:n}) + \mu\,\vert\,X_{1:m}=x_{1:m}) = \upprev{}_{\mathcal{M},\settrans{}}( f(X_{1:n})\,\vert\,X_{1:m}=x_{1:m}) + \mu$;
	\item \label{coherence: monotonicity} $f \leq g \Rightarrow \upprev{}_{\mathcal{M},\settrans{}}(f(X_{1:n})\,\vert\,X_{1:m}=x_{1:m}) \leq \upprev{}_{\mathcal{M},\settrans{}}(g(X_{1:n})\,\vert\,X_{1:m}=x_{1:m})$;
	\item \label{coherence: conditioning} $\upprev{}_{\mathcal{M},\settrans{}}(f(X_{1:n})\,\vert\,X_{1:m}=x_{1:m}) = \upprev{}_{\mathcal{M},\settrans{}}(f(x_{1:m}X_{m+1:n})\,\vert\,X_{1:m}=x_{1:m})$;
	\item \label{coherence: const is const}
	$\upprev{}_{\mathcal{M},\settrans{}}(\mu\,\vert\,X_{1:m}=x_{1:m}) = \mu$,
\end{enumerate}
where in~\ref{coherence: conditioning}, if $m=n$, we let $f(x_{1:m}X_{m+1:n})\coloneqq f(x_{1:m})$.
\end{proposition}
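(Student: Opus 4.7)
The plan is to derive each of the five coherence properties by lifting the corresponding well-known property of every precise conditional expectation $\prev{}_P(\cdot\,\vert\,X_{1:m}=x_{1:m})$, for $P \in \setofprocesses{\mathcal{M},\settrans{}}$, through the defining identity
\begin{equation*}
\upprev{}_{\mathcal{M},\settrans{}}(\,\cdot\,\vert\,X_{1:m}=x_{1:m}) = \sup_{P \in \setofprocesses{\mathcal{M},\settrans{}}} \prev{}_P(\,\cdot\,\vert\,X_{1:m}=x_{1:m}),
\end{equation*}
combined with elementary manipulations of the supremum. For every precise $P$, linearity and monotonicity of expectation with respect to a probability measure are standard, and so the main effort is in verifying that the relevant manipulations of the supremum are valid.

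For \ref{coherence: Homogeneity} I would first invoke $\prev{}_P(\lambda f \vert C) = \lambda \prev{}_P(f \vert C)$ for every $P$, and then apply the identity $\sup_P \lambda a_P = \lambda \sup_P a_P$, which requires $\lambda \geq 0$; this is precisely the hypothesis stated. For \ref{coherence: const. additiv.} I would use $\prev{}_P(f+\mu\vert C) = \prev{}_P(f\vert C) + \mu$ followed by $\sup_P (a_P + \mu) = \sup_P a_P + \mu$. For \ref{coherence: monotonicity}, monotonicity of each $\prev{}_P$ gives $\prev{}_P(f\vert C) \leq \prev{}_P(g\vert C)$, and taking the supremum on both sides preserves the inequality. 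Property \ref{coherence: const is const} is immediate, since $\prev{}_P(\mu \vert C) = \mu$ for every $P$, and the supremum of a constant collection is that constant.

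Property \ref{coherence: conditioning} requires a slightly more delicate argument. The key observation is that for every $P$ and every tuple $x_{1:m}$, under the conditioning event $\{X_{1:m}=x_{1:m}\}$ the functions $f(X_{1:n})$ and $f(x_{1:m} X_{m+1:n})$ coincide pointwise on the paths in this event, and hence have the same conditional expectation under $P$. Taking the supremum over $P \in \setofprocesses{\mathcal{M},\settrans{}}$ then preserves this equality, yielding the desired identity.

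The main obstacle I foresee is handling conditioning events $\{X_{1:m}=x_{1:m}\}$ that have zero probability under some $P \in \setofprocesses{\mathcal{M},\settrans{}}$, in which case $\prev{}_P(\cdot\vert X_{1:m}=x_{1:m})$ is not uniquely defined by the measure-theoretic Radon--Nikodym construction. Since the paper treats the probabilistic setup informally, the natural resolution is either to restrict the supremum to those $P$ under which the conditioning event carries positive probability, or to adopt a canonical extension of $\prev{}_P(\cdot\vert X_{1:m}=x_{1:m})$ that agrees with the imprecise-probability literature (for instance, via the game-theoretic framework mentioned at the end of the paper, in which conditional upper expectations are defined for every finite history without ambiguity). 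Under any such convention, the five arguments sketched above carry through verbatim, and the proposition reduces to an assembly of the cited facts about precise expectations.
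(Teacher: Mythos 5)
Your proposal is correct and follows essentially the same route as the paper's own proof: each property is lifted from the corresponding property of the precise conditional expectations $\prev{}_{\mathrm{P}}(\cdot\,\vert\,X_{1:m}=x_{1:m})$ through elementary manipulations of the supremum, with \ref{coherence: conditioning} handled exactly as you describe (the paper simply invokes ``the laws of probability'' for the precise case and then takes the supremum). Your closing remark about zero-probability conditioning events is a reasonable caveat that the paper glosses over by keeping its measure-theoretic setup informal, but it does not change the argument.
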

\begin{proof}
\ref{coherence: Homogeneity}. This follows immediately from the definition of $\upprev{}_{\mathcal{M},\settrans{}}$ and the fact that both the supremum operator and, for all $\mathrm{P}\in\setofprocesses{\mathcal{M},\settrans{}}$, the expectation operator $\prev_\mathrm{P}[\cdot\,\vert\,\cdot]$, are homogeneous with respect to non-negative scaling. That is, because $\lambda\geq 0$, it holds that
\begin{align*}
\upprev{}_{\mathcal{M},\settrans{}}(\lambda f(X_{1:n})\,\vert\,X_{1:m}=x_{1:m}) &= \sup_{\mathrm{P}\in\setofprocesses{\mathcal{M},\settrans{}}} \prev_\mathrm{P}(\lambda f(X_{1:n})\,\vert\,X_{1:m}=x_{1:m}) \\
 &= \lambda\sup_{\mathrm{P}\in\setofprocesses{\mathcal{M},\settrans{}}} \prev_\mathrm{P}( f(X_{1:n})\,\vert\,X_{1:m}=x_{1:m}) \\
 &= \lambda \upprev{}_{\mathcal{M},\settrans{}}( f(X_{1:n})\,\vert\,X_{1:m}=x_{1:m})\,.
\end{align*}

\ref{coherence: const. additiv.}. This follows from the definition of $\upprev{}_{\mathcal{M},\settrans{}}$ together with the constant additivity of both the supremum operator and the expectation operators $\prev_\mathrm{P}[\cdot\,\vert\,\cdot]$ for all $\mathrm{P}\in\setofprocesses{\mathcal{M},\settrans{}}$. To wit,
\begin{align*}
\upprev{}_{\mathcal{M},\settrans{}}( f(X_{1:n})+\mu\,\vert\,X_{1:m}=x_{1:m}) &= \sup_{\mathrm{P}\in\setofprocesses{\mathcal{M},\settrans{}}} \prev_\mathrm{P}( f(X_{1:n})+\mu\,\vert\,X_{1:m}=x_{1:m}) \\
&= \mu + \sup_{\mathrm{P}\in\setofprocesses{\mathcal{M},\settrans{}}} \prev_\mathrm{P}( f(X_{1:n})\,\vert\,X_{1:m}=x_{1:m}) \\
&= \mu+ \upprev{}_{\mathcal{M},\settrans{}}( f(X_{1:n})\,\vert\,X_{1:m}=x_{1:m})\,.
\end{align*}

\ref{coherence: monotonicity}. For any $\mathrm{P}\in\setofprocesses{\mathcal{M},\settrans{}}$ the expectation operator $\prev_\mathrm{P}[\cdot\,\vert\,\cdot]$ is monotone, i.e., if $f\leq g$ then
\begin{equation*}
\prev_\mathrm{P}(f(X_{1:n})\,\vert\,X_{1:m}=x_{1:m}) \leq \prev_\mathrm{P}(g(X_{1:n})\,\vert\,X_{1:m}=x_{1:m})\,.
\end{equation*} 
Since this is true for all $\mathrm{P}\in\setofprocesses{\mathcal{M},\settrans{}}$, it immediately follows from the definition of $\upprev{}_{\mathcal{M},\settrans{}}$ that then also
\begin{align*}
\upprev{}_{\mathcal{M},\settrans{}}(f(X_{1:n})\,\vert\,X_{1:m}=x_{1:m}) &= \sup_{\mathrm{P}\in\setofprocesses{\mathcal{M},\settrans{}} }\prev_\mathrm{P}(f(X_{1:n})\,\vert\,X_{1:m}=x_{1:m}) \\ 
&\leq \sup_{\mathrm{P}\in\setofprocesses{\mathcal{M},\settrans{}} }\prev_\mathrm{P}(g(X_{1:n})\,\vert\,X_{1:m}=x_{1:m}) \\
 &= \upprev{}_{\mathcal{M},\settrans{}}(g(X_{1:n})\,\vert\,X_{1:m}=x_{1:m})\,.
\end{align*}

\ref{coherence: conditioning}. For any $\mathrm{P}\in\setofprocesses{\mathcal{M},\settrans{}}$, we know from the laws of probability that
\begin{align*}
\prev{}_{\mathrm{P}}(f(X_{1:n})\,\vert\,X_{1:m}=x_{1:m}) = \prev{}_{\mathrm{P}}(f(x_{1:m}X_{m+1:n})\,\vert\,X_{1:m}=x_{1:m}).
\end{align*}
The statement now follows directly from the definition of $\upprev{}_{\mathcal{M},\settrans{}}$.

\ref{coherence: const is const}. As before, this follows immediately from the definition of $\upprev{}_{\mathcal{M},\settrans{}}$.
\qed
\end{proof}

\noindent
Theorem~\ref{Theorem: law of iterated} is an imprecise counterpart of the well-known \emph{law of iterated expectations} for linear expectation operators, and was already presented in \cite[Theorem 7]{deCooman:2008km}.
However, the result is derived there in an entirely different framework, based on Walley's \cite{Walley:1991vk} notion of coherent lower and upper previsions.
For readers that are not familiar with that framework, it may therefore not be entirely clear how \cite[Theorem 7]{deCooman:2008km} implies our Theorem~\ref{Theorem: law of iterated}.
For that reason, we will instead start from \cite[Theorem 21]{8535240}, as it is expressed in the same measure-theoretic framework that we consider here.

Concretely, the authors of \cite[Theorem 21]{8535240} consider a setting in which, for all $n\in \nats{}$ and all $x_{1:n}\in\statespace{}^{n}$, a set of probability mass functions on $\statespace{}$, which we here denote by $\mathbb{P}_{x_{1:n}}$, is given. Furthermore, they also consider an additional set of probability mass functions on $\statespace{}$, which we here denote by $\mathbb{P}_\square$, that is used as the initial model.
They then let $\setofprocesses{\, \mathbb{P}}$ be the set of all possible precise global models $\mathrm{P}$ that can be obtained by choosing a probability mass function $\mathrm{P}(X_1)$ in $\mathbb{P}_\square$ and, for all $n\in \nats{}$ and all $x_{1:n}\in\statespace{}^{n}$, a probability mass function $\mathrm{P}( X_{n+1} \, \vert \, X_{1:n} = x_{1:n})$ in $\mathbb{P}_{x_{1:n}}$.
Then, according to \cite[Theorem 21]{8535240}, we have, for any $n,m \in \natz{}$ such that $n > m$ and any finitary function $g(X_{1:n})$, that
\begin{align}\label{Eq: Stavros iterated}
\upprev{}_{\mathbb{P}}\bigl(g(X_{1:n}) \big\vert X_{1:m} \bigr) 
= \lupprev{}\big(\lupprev{}\big( \cdots \lupprev{}\bigl(g(X_{1:n}) \big\vert X_{1:n-1} \bigr) \cdots \big\vert X_{1:m+1} \bigr) \big\vert X_{1:m} \big),
\end{align} 
where $\upprev{}_{\mathbb{P}}$ is obtained by maximising the linear expectation $\prev{}_{\mathrm{P}}$ over all precise global models $\mathrm{P} \in \setofprocesses{\, \mathbb{P}}$---or taking the supremum if no maximum is attained.
For our present purposes, an explicit definition of the operators $\lupprev{}(\, \cdot \, \vert X_{1:n})$ is not required; it suffices to know that they are completely determined by the sets of probability mass functions $\mathbb{P}_{x_{1:n}}$ and $\mathbb{P}_\square$


In order to apply Equation~\eqref{Eq: Stavros iterated} in our setting, we now proceed to show that any imprecise Markov chain $\setofprocesses{\mathcal{M},\settrans{}}$, as defined in Section~\ref{section: imprecise markov chains}, is a special case of a set $\setofprocesses{\, \mathbb{P}}$ of global models as described above.
To that end, recall that in our setting, the imprecise Markov chain $\setofprocesses{\mathcal{M},\settrans{}}$ is the set of all global models $\mathrm{P}$ such that $\mathrm{P}(X_1)\in \mathcal{M}$ and, for all $n\in\nats$ and all $x_{1:n}\in\statespace^{n}$, there is some $T\in\settrans{}$ such that
\begin{equation*}
\probability{X_{n+1}=x_{n+1}}{X_{1:n}=x_{1:n}} = T(x_n,x_{n+1}) \text{ for all } x_{n+1} \in \statespace{},
\end{equation*}
or equivalently---since $\settrans{}$ has seperately specified rows---such that the local probability mass function $\probability{X_{n+1}}{X_{1:n}=x_{1:n}}$ is included in the set $\settrans{}_{x_n}$.
It should therefore be clear that our model is indeed a special case of the one described above. It is obtained by setting $\mathbb{P}_{x_{1:n}}\coloneqq\settrans{}_{x_n}$ for all $n\in\nats$ and all $x_{1:n}\in\statespace^{n}$, and choosing $\mathbb{P}_\square \coloneqq \mathcal{M}$.
Consequently, for any $n,m \in \natz{}$ such that $n > m$ and any finitary function $g(X_{1:n})$, Equation~\eqref{Eq: Stavros iterated} immediately implies that
\begin{multline}\label{Eq: Stavros iterated 2}
\upprev{}_{\mathcal{M},\settrans{}}\bigl(g(X_{1:n}) \big\vert X_{1:m} \bigr) 
= \lupprev{}\big(\lupprev{}\big( \cdots \lupprev{}\bigl(g(X_{1:n}) \big\vert X_{1:n-1} \bigr) \cdots \big\vert X_{1:m+1} \bigr) \big\vert X_{1:m} \big).
\end{multline}
Additionally, from the discussion above, it should be clear that \cite[Theorem 39]{8535240} and \cite[Lemma 40]{8535240} continue to hold in our setting.
By combining both results in a trivial way, we can conclude that Equation~\eqref{Eq: Markov property} indeed holds, as claimed in Section \ref{section: imprecise markov chains}.

\vspace{8pt}
\begin{proofof}{Theorem \ref{Theorem: law of iterated}}
Consider any $m \in \natz{}$, any $k \in \nats{}$ and any $f \in \setoffingambles{}(\samplespace{})$.
Then, without loss of generality, we can assume that $f$ depends on $X_{1:n}$, for some $n \in \nats{}$ such that $n > m+k$.
This allows us to write $g(X_{1:n}) \coloneqq f$ for some $g \in \setofgambles{}(\statespace{}^n)$.
So, it follows from Equation~\eqref{Eq: Stavros iterated 2} that
\begin{align*}
\upprev{}_{\mathcal{M},\settrans{}}\bigl(f \big\vert X_{1:m} \bigr)
&= \upprev{}_{\mathcal{M},\settrans{}}\bigl(g(X_{1:n}) \big\vert X_{1:m} \bigr)\\
&= \lupprev{}\big(\lupprev{}\big( \cdots \lupprev{}\bigl(g(X_{1:n}) \big\vert X_{1:n-1} \bigr) \cdots \big\vert X_{1:m+1} \bigr) \big\vert X_{1:m} \big)
\end{align*}
and, similarly, that
\begin{align*}
\upprev{}_{\mathcal{M},\settrans{}}\bigl(f \big\vert X_{1:m+k} \bigr)
&= \lupprev{}\big(\lupprev{}\big( \cdots \lupprev{}\bigl(g(X_{1:n}) \big\vert X_{1:n-1} \bigr) \cdots \big\vert X_{1:m+k+1} \bigr) \big\vert X_{1:m+k} \big).
\end{align*}
By combining both equalities, we find that
\begin{multline}\label{eq:innerreplaced}
\upprev{}_{\mathcal{M},\settrans{}}\bigl(f \big\vert X_{1:m} \bigr)\\
= \lupprev{}\big(\lupprev{}\big( \cdots \lupprev{}\bigl( \upprev{}_{\mathcal{M},\settrans{}}\bigl(f \big\vert X_{1:m+k} \bigr)\big\vert X_{1:m+k-1} \bigr)\cdots\big\vert X_{1:m+1}\bigr) \big\vert X_{1:m} \big).
\end{multline}

Now note that $\upprev{}_{\mathcal{M},\settrans{}}\bigl(f \big\vert X_{1:m+k} \bigr)$ is again a finitary function. Indeed, it clearly only depends on the state at a finite number of time instances---the first $m+k$, in this case---and is furthermore real-valued.
To understand the latter, observe that $f$ is bounded because $f$ can only take a finite number of values in $\reals{}$ (since $f$ is finitary and $\statespace$ is finite). 
Hence, $\inf f$ and $\sup f$ are real.
Then, by \ref{coherence: monotonicity} and \ref{coherence: const is const}, we can deduce that
\begin{align*}
\inf f \leq \upprev{}_{\mathcal{M},\settrans{}}\bigl(f \big\vert X_{1:m+k} \bigr) \leq \sup f,
\end{align*}
implying the real-valuedness of $\upprev{}_{\mathcal{M},\settrans{}}\bigl(f \big\vert X_{1:m+k} \bigr)$.

The final step of the proof consists in applying Equation~\eqref{Eq: Stavros iterated 2} one more time, but now to the finitary function $\upprev{}_{\mathcal{M},\settrans{}}\bigl(f \big\vert X_{1:m+k} \bigr)$. By doing so, we see that Equation~\eqref{eq:innerreplaced} is equivalent to
\begin{equation*}
 \upprev{}_{\mathcal{M},\settrans{}}\bigl(f \big\vert X_{1:m} \bigr)
= \upprev{}_{\mathcal{M},\settrans{}}\big( \upprev{}_{\mathcal{M},\settrans{}}\bigl(f \big\vert X_{1:m+k} \bigr) \big\vert X_{1:m} \big),
\end{equation*}
as desired.
\end{proofof}

\vspace{8pt}
\noindent
As a consequence of the law of iterated upper expectations, we can elegantly decompose the upper expectation of a finitary function using a notational trick that consists in extending the definition of the upper transition operator $\uptrans$. In particular, for any $n\in\nats{}$, we extend the operator $\uptrans\colon\setofgambles{}(\statespace{})\to\setofgambles{}(\statespace{})$ to an operator $\uptrans{} \colon \setofgambles{}(\statespace{}^{n+1}) \to \setofgambles{}(\statespace{}^{n})$, defined by
\begin{align*}
[\uptrans{}f](x_{1:n}) \coloneqq [\uptrans{}f(x_{1:n} \cdot)](x_{n}),
\end{align*}
for all $f \in \setofgambles{}(\statespace{}^{n+1})$ and all $x_{1:n} \in \statespace{}^n$, where we used the notation $f(x_{1:n} \cdot)$ to denote the function in $\setofgambles{}(\statespace{})$ that takes the value $f(x_{1:n+1})$ in $x_{n+1} \in \statespace{}$.
Using this notation, we obtain the following result.

\begin{proposition}\label{prop: iterated T}
Consider any imprecise Markov chain $\setofprocesses{\mathcal{M},\settrans{}}$.
Then for all $n,m\in\nats$ with $n>m$ and all $f\in\setofgambles{}(\statespace{}^{n-m+1})$, we have that
\begin{align}\label{Eq: prop: iterated T}
 \upprev{}_{\mathcal{M},\settrans{}}(f(X_{m:n}) \vert X_{1:m}) 
  &= \bigr[\, \overline{T}^{(n-m)} f \bigr] (X_{m}) .
\end{align}
\end{proposition}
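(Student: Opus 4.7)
\begin{proofof}{Proposition \ref{prop: iterated T} (sketch)}
The plan is a two-stage argument: first reduce to the case $m = 1$ using the time-shift property, and then prove the resulting identity by induction on $k := n-m \geq 1$, with the base case handled by the imprecise Markov property \eqref{Eq: Markov property} and the inductive step by the law of iterated upper expectations (Theorem~\ref{Theorem: law of iterated}).

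For the reduction, fix any $x_{1:m}\in\statespace^{m}$. By Proposition~\ref{prop:time shift} applied with $y = x_m$, we have
\begin{equation*}
\upprev{}_{\mathcal{M},\settrans{}}\bigl(f(X_{m:n}) \big\vert X_{1:m}=x_{1:m}\bigr) = \upprev{}_{\mathcal{M},\settrans{}}\bigl(f(X_{1:n-m+1}) \big\vert X_1=x_m\bigr).
\end{equation*}
So it suffices to prove the following for all $k\in\nats$ and all $g\in\setofgambles(\statespace^{k+1})$: $\upprev{}_{\mathcal{M},\settrans{}}(g(X_{1:k+1})\vert X_1) = [\uptrans^{k} g](X_1)$, where $\uptrans^{k}$ is understood in the extended sense $\setofgambles(\statespace^{k+1}) \to \setofgambles(\statespace)$.

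\emph{Base case $k=1$.} For any $x_1\in\statespace$, property \ref{coherence: conditioning} gives
\begin{equation*}
\upprev{}_{\mathcal{M},\settrans{}}(g(X_{1:2}) \vert X_1 = x_1) = \upprev{}_{\mathcal{M},\settrans{}}(g(x_1, X_2) \vert X_1 = x_1),
\end{equation*}
and then Equation~\eqref{Eq: Markov property} applied to $g(x_1,\cdot) \in \setofgambles(\statespace)$ yields $[\uptrans g(x_1,\cdot)](x_1) = [\uptrans g](x_1)$ by the definition of the extended operator.

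\emph{Inductive step.} Assume the claim for $k$, and let $g\in\setofgambles(\statespace^{k+2})$. Apply Theorem~\ref{Theorem: law of iterated} with $m=1$ and step-size $k$ (in the theorem's notation) to obtain
\begin{equation*}
\upprev{}_{\mathcal{M},\settrans{}}(g(X_{1:k+2}) \vert X_1) = \upprev{}_{\mathcal{M},\settrans{}}\bigl(\upprev{}_{\mathcal{M},\settrans{}}(g(X_{1:k+2}) \vert X_{1:k+1}) \big\vert X_1\bigr).
\end{equation*}
Using \ref{coherence: conditioning} together with Equation~\eqref{Eq: Markov property} and the extended-operator definition, for any $x_{1:k+1}$,
\begin{equation*}
\upprev{}_{\mathcal{M},\settrans{}}(g(X_{1:k+2}) \vert X_{1:k+1}=x_{1:k+1}) = [\uptrans g(x_{1:k+1},\cdot)](x_{k+1}) = [\uptrans g](x_{1:k+1}),
\end{equation*}
so the inner conditional upper expectation, as a function of $X_{1:k+1}$, equals $[\uptrans g](X_{1:k+1})$ with $\uptrans g \in \setofgambles(\statespace^{k+1})$. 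Applying the induction hypothesis to $\uptrans g$ gives $[\uptrans^{k}(\uptrans g)](X_1) = [\uptrans^{k+1} g](X_1)$, completing the induction.

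The proof is essentially bookkeeping; the only point requiring care is the distinction between the original operator $\uptrans\colon\setofgambles(\statespace)\to\setofgambles(\statespace)$ and its extension $\uptrans\colon\setofgambles(\statespace^{n+1})\to\setofgambles(\statespace^{n})$, and ensuring that the output $\upprev{}_{\mathcal{M},\settrans{}}(f\vert X_{1:m+k})$ appearing in the law of iterated upper expectations is indeed a finitary function so that Theorem~\ref{Theorem: law of iterated} is applicable (this is guaranteed by \ref{coherence: monotonicity} and \ref{coherence: const is const}, exactly as argued in the proof of Theorem~\ref{Theorem: law of iterated}).
\end{proofof}
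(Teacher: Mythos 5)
There is a circularity problem in your reduction step. You invoke Proposition~\ref{prop:time shift} to pass from general $m$ to the case $m=1$, but in the paper's logical development Proposition~\ref{prop:time shift} is itself \emph{proved as a corollary of} Proposition~\ref{prop: iterated T}: its proof consists precisely of applying \eqref{Eq: prop: iterated T} twice (once for general $m$, once for $m=1$) and noting that both sides equal $\bigl[\uptrans^{(n-m)}f\bigr](y)$. So unless you supply an independent proof of the time-shift property (say, a direct argument that the set $\setofprocesses{\mathcal{M},\settrans{}}$ is invariant under shifting the conditioning history, which neither you nor the paper provides), your argument assumes a statement whose only available justification is the very proposition you are trying to prove.

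The good news is that the rest of your argument is sound and is essentially the paper's proof restricted to $m=1$: the base case via \ref{coherence: conditioning} and Equation~\eqref{Eq: Markov property}, and the inductive step via Theorem~\ref{Theorem: law of iterated} followed by the identity $\upprev{}_{\mathcal{M},\settrans{}}\bigl(g(X_{1:k+2})\big\vert X_{1:k+1}\bigr)=\bigl[\uptrans g\bigr](X_{1:k+1})$ and the induction hypothesis applied to $\uptrans g\in\setofgambles{}(\statespace{}^{k+1})$. The fix is simply to drop the reduction and run this same induction at general fixed $m$, on $n>m$: the inner conditional upper expectation becomes $\upprev{}_{\mathcal{M},\settrans{}}\bigl(f(X_{m:n+1})\big\vert X_{1:n}=x_{1:n}\bigr)=\bigl[\uptrans f\bigr](x_{m:n})$, where \ref{coherence: conditioning} absorbs the additional conditioning variables $X_{1:m-1}$, and everything else goes through verbatim. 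That is exactly what the paper does, and it also correctly leaves Proposition~\ref{prop:time shift} to be derived afterwards from the result.
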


 \begin{proof}
Fix any $m \in \nats{}$.
We prove by induction that Equation~\eqref{Eq: prop: iterated T} holds for all $n>m$.
It is clear that it holds for $n = m+1$ because of Equation~\eqref{Eq: Markov property}.
To prove the induction step, suppose that Equation~\eqref{Eq: prop: iterated T} holds for some $n>m$ and all $g\in\setofgambles{}(\statespace{}^{n-m+1})$.
Fix any $f\in\setofgambles{}(\statespace{}^{n-m+2})$.
Note that, for all $x_{1:n} \in \statespace{}^{n}$, we have that 
\begin{align*}
 \upprev{}_{\mathcal{M},\settrans{}}\bigl(f(X_{m:n+1}) \big\vert X_{1:n} = x_{1:n} \bigr) 
 &=  \upprev{}_{\mathcal{M},\settrans{}}\bigl(f(x_{m:n}X_{n+1}) \big\vert X_{1:n} = x_{1:n}\bigr) \nonumber \\
 &= \bigl[\uptrans{}(f(x_{m:n}\cdot)) \bigr](x_{n}) \nonumber \\
 &= \bigl[\uptrans{}f \bigr](x_{m:n}),
\end{align*}
where the first step follows from \ref{coherence: conditioning}, the second step from Equation~\eqref{Eq: Markov property} and the third from the extended definition of $\uptrans{}$.
Hence, we can write that 
\begin{align*}
 \upprev{}_{\mathcal{M},\settrans{}}\bigl( \/ f(X_{m:n+1}) \big\vert X_{1:n} \bigr) 
 = \bigl[\uptrans{}f \bigr](X_{m:n}).
\end{align*}
Then, by Theorem~\ref{Theorem: law of iterated},
\begin{align*}
\upprev{}_{\mathcal{M},\settrans{}}\bigl(f(X_{m:n+1}) \big\vert X_{1:m} \bigr) 
&=\upprev{}_{\mathcal{M},\settrans{}}\bigl(\/ \upprev{}_{\mathcal{M},\settrans{}}\bigl(f(X_{m:n+1})\big\vert X_{1:n} \bigr) \big\vert X_{1:m} \bigr) \\
&=\upprev{}_{\mathcal{M},\settrans{}}\bigl([\uptrans{} f](X_{m:n}) \big\vert X_{1:m} \bigr).
\end{align*}
The last step consists in applying the induction hypothesis to find that
\begin{align*}
\upprev{}_{\mathcal{M},\settrans{}}\bigl(f(X_{m:n+1}) \big\vert X_{1:m} \bigr) 
&=\upprev{}_{\mathcal{M},\settrans{}}\bigl([\uptrans{} f](X_{m:n}) \big\vert X_{1:m} \bigr) \\
&= \Big[ \uptrans{}^{(n-m)} \bigl(\uptrans{} f \bigr) \Big] (X_m) = \bigl[ \uptrans{}^{(n-m+1)} f \bigr] (X_m),
\end{align*}
as desired. \qed
\end{proof}

\begin{proofof}{Proposition~\ref{prop:time shift}}
By Proposition~\ref{prop: iterated T} it holds that
\begin{equation*}
\upprev{}_{\mathcal{M},\settrans{}}(f(X_{m:n}) \vert X_{1:m-1}=x_{1:m-1},X_m=y) 
= \bigr[\, \overline{T}^{(n-m)} f \bigr] (y). 
\end{equation*}
Similarly, it follows from Proposition~\ref{prop: iterated T} that
\begin{equation*}
\upprev{}_{\mathcal{M},\settrans{}}(f(X_{1:n-m+1}) \vert X_1=y) = \bigr[\, \overline{T}^{(n-m)} f \bigr] (y),
\end{equation*}
and hence
\begin{align*}
\upprev{}_{\mathcal{M},\settrans{}}(f(X_{m:n}) \vert X_{1:m-1}=x_{1:m-1},X_m=y) &= \bigr[\, \overline{T}^{(n-m)} f \bigr] (y) \\
 &= \upprev{}_{\mathcal{M},\settrans{}}(f(X_{1:n-m+1}) \vert X_1=y),
\end{align*}
as desired
\end{proofof}

\vspace{8pt}
\begin{proofof}{Theorem \ref{theorem: algorithm}}
Let us first recall that 
$\tau_1(x_1) = g_0(x_1)$ for all $x_1 \in \statespace$, and that, for any $n\in\nats$, 
\begin{align}\label{Eq: recursive expression tau}
\tau_{n+1}(x_{1:n+1}) = h_n(x_1) \tau_{n}(x_{2:n+1})+ g_n(x_1) &\text{ for all } x_{1:n+1} \in \statespace{}^{n+1}.
\end{align}
Furthermore, we have that $\overline{\Upsilon}_n(x) = \upprev{}_{\mathcal{M},\settrans{}}(\tau_n(X_{1:n}) \vert X_1 = x)$ and $\underline{\Upsilon}_n(x) = \lowprev{}_{\mathcal{M},\settrans{}}(\tau_n(X_{1:n}) \vert X_1 = x)$ for all $x \in \statespace{}$ and all $n \in \nats{}$.

To start our proof, we observe that, for any $x \in \statespace{}$, 
\begin{align*}
\overline{\Upsilon}_1(x) = \upprev{}_{\mathcal{M},\settrans{}}(g_0(X_{1}) \vert X_1 = x) 
= \upprev{}_{\mathcal{M},\settrans{}}(g_0(x) \vert X_1 = x) = g_0(x),
\end{align*} 
where we used \ref{coherence: conditioning} in the second step and \ref{coherence: const is const} in the last step. Similarly, by combining this argument with conjugacy, we find that
\begin{align*}
\underline{\Upsilon}_1(x) &= \lowprev{}_{\mathcal{M},\settrans{}}(g_0(X_{1}) \vert X_1 = x)\\
&=-\upprev{}_{\mathcal{M},\settrans{}}(-g_0(X_{1}) \vert X_1 = x)
=-\upprev{}_{\mathcal{M},\settrans{}}(-g_0(x) \vert X_1 = x)
= g_0(x).
\end{align*} 
Hence, we indeed have that $\overline{\Upsilon}_1 = \underline{\Upsilon}_1 = g_0$.

We now proceed to show that 
\begin{align}\label{eq:recursiveinproof}
\overline{\Upsilon}_{n+1} = h_n \indica{h_n \geq 0}[\uptrans{} \, \overline{\Upsilon}_{n}] + h_n \indica{h_n < 0}[\lowtrans{} \, \underline{\Upsilon}_{n}] + g_n \text{ for all } n \in \nats{}.
\end{align}
Fix any $n \in \nats{}$ and any $x \in \statespace{}$.
Using Equation~\eqref{Eq: recursive expression tau} in combination with \ref{coherence: conditioning}, we get that $\overline{\Upsilon}_{n+1}(x) = \upprev{}_{\mathcal{M},\settrans{}}(h_n(x) \tau_{n}(X_{2:n+1}) + g_n(x) \vert X_1 = x)$, which in turn is equal to $\upprev{}_{\mathcal{M},\settrans{}}(h_n(x) \tau_{n}(X_{2:n+1}) \vert X_1 = x) + g_n(x)$ because of \ref{coherence: const. additiv.}.
Furthermore, if $h_n(x) \geq 0$, then we have that $\overline{\Upsilon}_{n+1}(x) = h_n(x) \upprev{}_{\mathcal{M},\settrans{}}( \tau_{n}(X_{2:n+1}) \vert X_1 = x) + g_n(x)$ due to \ref{coherence: Homogeneity}.
If on the other hand $h_n(x) < 0$, then, by using \ref{coherence: Homogeneity} in combination with conjugacy, we find that
\begin{align*}
\overline{\Upsilon}_{n+1}(x) 
&= \upprev{}_{\mathcal{M},\settrans{}}( h_n(x)\tau_{n}(X_{2:n+1}) \vert X_1 = x) + g_n(x)\\
&= -h_n(x) \upprev{}_{\mathcal{M},\settrans{}}( -\tau_{n}(X_{2:n+1}) \vert X_1 = x) + g_n(x)\\
&= h_n(x) \lowprev{}_{\mathcal{M},\settrans{}}( \tau_{n}(X_{2:n+1}) \vert X_1 = x) + g_n(x).
\end{align*}
So in summary, we have found that
\begin{multline*}
\overline{\Upsilon}_{n+1}(x) = h_n(x) \indica{h_n \geq 0}(x) \upprev{}_{\mathcal{M},\settrans{}}( \tau_{n}(X_{2:n+1}) \vert X_1 = x) \\ 
+ h_n(x) \indica{h_n < 0}(x) \lowprev{}_{\mathcal{M},\settrans{}}( \tau_{n}(X_{2:n+1}) \vert X_1 = x) + g_n(x).
\end{multline*}
We now proceed to prove that 
$\upprev{}_{\mathcal{M},\settrans{}}( \tau_{n}(X_{2:n+1}) \vert X_1 = x)=[\uptrans{} \, \overline{\Upsilon}_{n}](x)$
and $\lowprev{}_{\mathcal{M},\settrans{}}( \tau_{n}(X_{2:n+1}) \vert X_1 = x)=[\lowtrans{} \, \underline{\Upsilon}_{n}](x)$. In combination with the above equality, this then clearly implies Equation~\eqref{eq:recursiveinproof}.

To see that 
$\upprev{}_{\mathcal{M},\settrans{}}( \tau_{n}(X_{2:n+1}) \vert X_1 = x)=[\uptrans{} \, \overline{\Upsilon}_{n}](x)$, we first apply Theorem~\ref{Theorem: law of iterated} to find that
\begin{align*}
\upprev{}_{\mathcal{M},\settrans{}}( \tau_{n}(X_{2:n+1}) \vert X_1 = x) &= \upprev{}_{\mathcal{M},\settrans{}}\big( \upprev{}_{\mathcal{M},\settrans{}}( \tau_{n}(X_{2:n+1}) \vert X_{1:2}) \vert X_1 = x \big).
\end{align*}
Now note that $\upprev{}_{\mathcal{M},\settrans{}}( \tau_{n}(X_{2:n+1}) \vert X_{1:2}) = \overline{\Upsilon}_{n}(X_2)$: indeed, for any $\omega \in \samplespace{}$, we have, by Proposition~\ref{prop:time shift}, that
\begin{align*}
\upprev{}_{\mathcal{M},\settrans{}}( \tau_{n}(X_{2:n+1}) \vert X_{1:2} = \omega_{1:2}) = \upprev{}_{\mathcal{M},\settrans{}}( \tau_{n}(X_{1:n}) \vert X_{1} = \omega_{2}) = \overline{\Upsilon}_{n}(\omega_2).
\end{align*}
Hence, we find that
\begin{align*}
\upprev{}_{\mathcal{M},\settrans{}}( \tau_{n}(X_{2:n+1}) \vert X_1 = x) &= \upprev{}_{\mathcal{M},\settrans{}}\big( \overline{\Upsilon}_{n}(X_2) \vert X_1 = x \big)=[\uptrans\,\overline{\Upsilon}_{n}](x),
\end{align*}
where we use Equation~\eqref{Eq: Markov property} for the final equality.

To see that also $\lowprev{}_{\mathcal{M},\settrans{}}( \tau_{n}(X_{2:n+1}) \vert X_1 = x)=[\lowtrans{} \, \underline{\Upsilon}_{n}](x)$, we employ a similar argument that additionally uses conjugacy.
We first apply Theorem~\ref{Theorem: law of iterated} to find that
\begin{align*}
\lowprev{}_{\mathcal{M},\settrans{}}( \tau_{n}(X_{2:n+1}) \vert X_1 = x)&=-\upprev{}_{\mathcal{M},\settrans{}}(-\tau_{n}(X_{2:n+1}) \vert X_1 = x) \\
&= -\upprev{}_{\mathcal{M},\settrans{}}\big( \upprev{}_{\mathcal{M},\settrans{}}( -\tau_{n}(X_{2:n+1}) \vert X_{1:2}) \vert X_1 = x \big).
\end{align*}
Now note that $\upprev{}_{\mathcal{M},\settrans{}}(-\tau_{n}(X_{2:n+1}) \vert X_{1:2}) = -\underline{\Upsilon}_{n}(X_2)$: indeed, for any $\omega \in \samplespace{}$, we have, by Proposition~\ref{prop:time shift}, that
\begin{align*}
\upprev{}_{\mathcal{M},\settrans{}}(-\tau_{n}(X_{2:n+1}) \vert X_{1:2} = \omega_{1:2}) &= \upprev{}_{\mathcal{M},\settrans{}}(- \tau_{n}(X_{1:n}) \vert X_{1} = \omega_{2})\\
&= -\lowprev{}_{\mathcal{M},\settrans{}}(\tau_{n}(X_{1:n}) \vert X_{1} = \omega_{2}) = -\underline{\Upsilon}_{n}(\omega_2).
\end{align*}
Hence, we find that
\begin{align*}
\lowprev{}_{\mathcal{M},\settrans{}}( \tau_{n}(X_{2:n+1}) \vert X_1 = x) = -\upprev{}_{\mathcal{M},\settrans{}}\big( -\underline{\Upsilon}_{n}(X_2) \vert X_1 = x \big)&=-[\uptrans(-\underline{\Upsilon}_{n})](x)\\
&=[\lowtrans\,\underline{\Upsilon}_{n}](x),
\end{align*}
where we use Equation~\eqref{Eq: Markov property} for the second equality.


We are left to prove the recursive expression for $\underline{\Upsilon}_{n+1}$.
To that end, for all $n\in\nats{}$, we let $\tau'_n \coloneqq - \tau_n$ and we let $\smash{\overline{\Upsilon}_n'}\in \setofgambles{}(\statespace{})$ be defined by 
\begin{equation*}
\overline{\Upsilon}_n'(x)\coloneqq \upprev{}_{\mathcal{M},\settrans{}}(\tau'_n(X_{1:n}) \vert X_1 = x)
=-\lowprev{}_{\mathcal{M},\settrans{}}(\tau_n(X_{1:n}) \vert X_1 = x)=-\underline{\Upsilon}_n(x)
\end{equation*}
for all $x\in\statespace{}$. Similarly, for all $n\in\nats{}$, we let $\smash{\underline{\Upsilon}_n'}\in \setofgambles{}(\statespace{})$ be defined by
\begin{equation*}
\underline{\Upsilon}_n'(x)\coloneqq \lowprev{}_{\mathcal{M},\settrans{}}(\tau'_n(X_{1:n}) \vert X_1 = x)
=-\upprev{}_{\mathcal{M},\settrans{}}(\tau_n(X_{1:n}) \vert X_1 = x)=-\overline{\Upsilon}_n(x)
\end{equation*}
for all $x\in\statespace{}$.
Now observe that, for all $n \in \nats{}$,
\begin{align*}
\tau_{n+1}'(x_{1:n+1}) = h_n'(x_1) \tau_{n}'(x_{2:n+1}) + g_n'(x_1) &\text{ for all } x_{1:n+1} \in \statespace{}^{n+1},
\end{align*}
where we let $h_n' \coloneqq h_n$ and $g_n' \coloneqq - g_n$. 
It then follows from Equation~\eqref{eq:recursiveinproof} that
\begin{equation*}
\overline{\Upsilon}'_{n+1} = h_n' \indica{h_n' \geq 0} [\uptrans{} \, \overline{\Upsilon}'_{n}] 
+ h_n' \indica{h_n' < 0} [\lowtrans{} \, \underline{\Upsilon}'_{n}] + g_n'\text{ for all $n\in\nats{}$,}
\end{equation*}
or equivalently, that
\begin{align*}
-\underline{\Upsilon}_{n+1} &= h_n \indica{h_n \geq 0}(x) [\uptrans{}(-\underline{\Upsilon}_{n})] 
+ h_n \indica{h_n < 0} [\lowtrans{}(-\overline{\Upsilon}_{n})] - g_n\\
&= -h_n \indica{h_n \geq 0}(x) [\lowtrans{}\,\underline{\Upsilon}_{n}] 
- h_n \indica{h_n < 0} [\uptrans{}\,\overline{\Upsilon}_{n}] - g_n\text{ for all $n\in\nats{}$,}
\end{align*}
from which the desired recursive expression for $\underline{\Upsilon}_{n+1}$ follows immediately.
\end{proofof}
\end{document}